\newtheorem{thm}{Theorem}[section]
\newtheorem{cor}[thm]{Corollary}
\newtheorem{lem}[thm]{Lemma}
\newtheorem{prop}[thm]{Proposition}
 \theoremstyle{definition}
\theoremstyle{remark}
\newtheorem{remark}[thm]{Remark}
\newtheorem{example}[thm]{Example}
\numberwithin{equation}{section}
\newcommand{\ben}{\begin{enumerate}}
\newcommand{\een}{\end{enumerate}}
\newcommand{\bit}{\begin{itemize}}
\newcommand{\eit}{\end{itemize}}
\begin{document}

\title
{Constructing Infinite Sets of Orthogonal Exponentials for Convex Polytopes}

\vskip 1cm
\author{Yehonatan Salman \\ Email: salman.yehonatan@gmail.com\\}
\date{}

\maketitle

\begin{abstract}

The aim of this article is to show the existence, and also give an explicit construction, of infinite sets of orthogonal exponentials for certain families of convex polytopes which include simple-rational polytopes and also non simple polytopes which satisfy other nontrivial conditions. We also show that by considering weight functions one can construct infinite sets of orthogonal exponentials with a positive density by considering orthogonal projections of affine transformations of hypercubes (i.e., zonotopes).

\end{abstract}

\section{Introduction and Mathematical Background}

Let $d\geq1$ be a positive integer and $\mathcal{D}$ be a bounded measurable set in $\Bbb R^{d}$ with a positive Lebesgue measure. For a countable subset $\Lambda\subset\Bbb R^{d}$, the family $E(\Lambda) = \{e^{i\langle \lambda, \cdot\rangle}\}_{\lambda\in\Lambda}$, where $\langle, \rangle$ denotes the scalar product in $\Bbb R^{d}$, is said to be \textbf{a set of orthogonal exponentials} for $\mathcal{D}$ if the following condition
\vskip-0.2cm
\begin{equation}\label{eq:1}\hskip-3.5cm\int_{\mathcal{D}}e^{i\langle \lambda, x\rangle}\overline{e^{i\langle \lambda', x\rangle}}dx = \int_{\mathcal{D}}e^{i\langle\lambda - \lambda', x\rangle}dx = 0, \forall\lambda, \lambda'\in\Lambda, \lambda\neq\lambda'\end{equation}
is satisfied.

The problem of existence and construction of sets $E(\Lambda)$ of orthogonal exponentials for various prescribed sets $\mathcal{D}$ was investigated by many authors where most research and results are dedicated for characterizing the family of sets $\mathcal{D}$ which have orthogonal sets of exponentials $E(\Lambda)$ and for which $\Lambda$ is also a spectrum set for $\mathcal{D}$. That is, $E(\Lambda)$ is a set of orthogonal exponentials for $\mathcal{D}$ which is also a basis for $L^{2}(\mathcal{D})$. For a small sample of some classical results see \cite{Fuglede74, Fuglede01, IKT01, IKT03, Kol, Laba, LRW}.

The results in the references mentioned above imply some nontrivial necessary conditions on the family of sets $\mathcal{D}$ which have spectrum sets $\Lambda$. For example, in \cite{Kol} it is proved that if $\mathcal{D}$ is convex and has a spectrum set then it is centrally symmetric and in \cite{IKT01} it is shown that any convex body $\mathcal{D}$ with a smooth boundary does not have spectrum sets. In \cite{Fuglede74} Fuglede made the conjecture that $\mathcal{D}$ has a spectrum set if and only if it tiles the space $\Bbb R^{d}$ by translations. Considering Fuglede's conjecture and the results mentioned above it follows that only certain families of bounded measurable sets $\mathcal{D}$ have spectrum sets and (aside from the trivial cases where $\mathcal{D}$ is an affine transformation of a hypercube) the construction of such sets will be quite complicated. Hence, the more general question arises on the existence and construction of nontrivial orthogonal sets $E(\Lambda)$ of exponentials for a given set $\mathcal{D}$ in case where the condition that $E(\Lambda)$ is a basis for $L^{2}(\mathcal{D})$ is relaxed. In particular, one can ask which measurable sets $\mathcal{D}$ have infinite sets of orthogonal exponentials? This leads us to the main problem of this paper.\\

\textbf{Main Problem}: \textbf{Let} $\mathcal{D}$ \textbf{be a bounded measurable set with a positive Lebesgue measure, then, does} $\mathcal{D}$ \textbf{have an infinite set of orthogonal exponentials} $E(\Lambda)$ \textbf{and if so then} \textbf{how the set} $\Lambda$ \textbf{can be constructed}?\\

In this paper we deal with the above main problem for the case where $\mathcal{D}$ is assumed to be a convex polytope $P$. In Theorem 2.1 we show that every convex, simple and rational polytope has an infinite set of orthogonal exponentials and show how to construct such a set. In Theorem 2.2 we show a similar result for polytopes which are not necessarily simple but which satisfy the condition that their edges are not contained in a certain family of parallel hyperplanes (see Theorem 2.2 for the exact formulation). In Theorem 2.4 we obtain for a given simple, convex polytope $P$ a partial characterization of its infinite orthogonal sets $E(\Lambda)$ of exponentials where $\Lambda$ is a lattice and show that $\Lambda$ must be contained on certain hyperplanes determined by $P$.

The constructions described in Theorems 2.1-2.2 will unfortunately all ways produce orthogonal sets $E(\Lambda)$ of exponentials for which $\Lambda$ has a zero density. The exact definition of a density $D(\Lambda)$ of a subset $\Lambda\subseteq\Bbb R^{d}$ will be given rigourously later in this chapter. The concept of a density $D(\Lambda)$ of a subset $\Lambda\subseteq\Bbb R^{d}$ is important since in some sense it measures how much the orthogonal set $E(\Lambda)$ of exponentials, for a domain $\mathcal{D}$, is "far away" from being a basis for $L^{2}(\mathcal{D})$. Indeed, this is a conclusion from a result obtained by Landau in \cite{La67} which implies in particular that if $E(\Lambda)$ is an orthogonal set of exponentials (with a uniform normalizing factor) for a bounded measurable set $\mathcal{D}$ such that $E(\Lambda)$ is also a basis for $L^{2}(\mathcal{D})$ then $D(\Lambda)\geq |\mathcal{D}| / (2\pi)^{d}$. This implies in particular that the above constructions cannot provide orthogonal bases of exponentials for a given convex polytope $P$.

This leads us to the final result of this paper, obtained in Theorem 2.5, where we show how to obtain infinite sets of orthogonal exponentials $E(\Lambda)$ where $\Lambda$ will have a positive density. The construction will be given for a subset of polytopes which are obtained by orthogonal projections of affine transformations of hypercubes, i.e., zonotopes, but also the obtained set $E(\Lambda)$ will be orthogonal with respect to a positive weight function which results from the projections described above. It can be easily proved that Landau's necessary condition that $D(\Lambda)\geq |\mathcal{D}| / (2\pi)^{d}$ also holds in case where one considers orthogonal exponentials with respect to some weight function. For this we give an explicit lower bound for the density $D(\Lambda)$ of $\Lambda$, which depends on each given zonotope $P$, where in case that this lower bound is greater than $|P| / (2\pi)^{d}$ then one would hope that the above construction will provide an orthogonal basis $E(\Lambda)$ of exponentials for $L^{2}(P)$. However, the possibility of the existence of such a construction, which will provide a basis, is not going to be addressed in this paper.\\

Before formulating the main results we give basic definitions and notations that will be used later on in the text.

Denote by $\Bbb R^{d}$ the $d$ dimensional Euclidean space, by $e_{i}, 1\leq i\leq d$, the standard coordinate unit vectors and by $C_{d} = [-1, 1]^{d}$ the unit cube in $\Bbb R^{d}$. For a subset $\Lambda\subset\Bbb R^{d}$ denote by $E(\Lambda) = \{e^{i\langle \lambda, \cdot\rangle}:\lambda\in\Lambda\}$ its corresponding set of exponentials. The set $\Lambda$ is called a (full rank) \textbf{lattice} if it has the form
$$\hskip-3.25cm\Lambda = M\Bbb Z^{d} = \{(Me_{1})n_{1} + ... + (Me_{d})n_{d}:n_{i}\in\Bbb Z, 1\leq i\leq d\}$$
for some non degenerate matrix $M\in\Bbb R^{d\times d}$. The set $\Lambda$ is said to be a lattice of rank $m, 1\leq m\leq d$, if it is contained in a subspace of $\Bbb R^{d}$ of dimension $m$ and is a lattice with respect to this subspace.

For the set $\Lambda$ define the following quantities
\vskip-0.2cm
\begin{equation}\label{eq:2}\hskip-2cm D^{+}(\Lambda) = \lim_{\rho\rightarrow\infty}\sup_{x\in\Bbb R^{d}}\frac{|\Lambda\cap(x + S_{\rho})|}{\rho^{d}}, D^{-}(\Lambda) = \lim_{\rho\rightarrow\infty}\inf_{x\in\Bbb R^{d}}\frac{|\Lambda\cap(x + S_{\rho})|}{\rho^{d}}\end{equation}
where $S_{\rho} = [0, \rho)^{d}$. Then, if both $D^{\pm}(\Lambda)$ exist (i.e., the corresponding limits exist) and $D^{+}(\Lambda) = D^{-}(\Lambda)$ then the \textbf{density} of $\Lambda$ is defined to be equal to $D^{+}(\Lambda)$ (or $D^{-}(\Lambda)$) and is denoted by $D(\Lambda)$. In case where $\Lambda$ is a lattice determined by the matrix $M$ then it can be easily proved that $D(\Lambda) = 1 / |\det M|$.\\

A \textbf{convex polytope} in $\Bbb R^{d}$ is defined to be the convex hull $P$ of a finite set of points in $\Bbb R^{d}$ such that $P$ has a positive Lebesgue measure. A convex polytope $P\subset\Bbb R^{d}$ is called \textbf{simple} if each of its vertices is connected to exactly $d$ other vertices and is called \textbf{rational} if each of its vertices has only rational components.

A convex polytope $P\subset\Bbb R^{m}$ is called a \textbf{zonotope} of dimension $m\geq 1$ if it is the orthogonal projection of a set of the form $MC_{d}$ on $\Bbb R^{m}$ where $d - 1\geq m$ and where $M$ is a non degenerate $d\times d$ matrix.

For a convex polytope $P$ we denote by $F_{P}$ the \textbf{Fourier transform} of the indicator function of $P$, i.e.,
\vskip-0.2cm
$$\hskip-8.7cm F_{P}(\omega) = \int_{P}e^{-i\langle x, \omega\rangle}dx, \omega\in\Bbb R^{d},\\$$
and denote by $\mathcal{Z}(F_{P})$ the set of zeros of $F_{P}$.

As was mentioned above, the set $E(\Lambda)$ is a set of orthogonal exponentials for the convex polytope $P$ if condition (\ref{eq:1}) is satisfied for $\mathcal{D} = P$. Observe that condition (\ref{eq:1}) is satisfied for $E(\Lambda)$ if and only if it is satisfied for $E(\Lambda - \lambda)$ for any arbitrary $\lambda$ in $\Bbb R^{d}$ that we choose. Hence, we will all ways assume, without loss of generality, that $\bar{0}\in\Lambda$. In terms of the Fourier transform observe that $E(\Lambda)$ is a set of orthogonal exponentials for $P$ if and only if the following condition
\vskip-0.2cm
$$\hskip-9.75cm\Lambda - \Lambda\subseteq\mathcal{Z}(F_{P})\cup\{\bar{0}\}$$
is satisfied. This condition will be used during the proofs of the main results. If $\Lambda$ is a lattice then $\Lambda = \Lambda - \Lambda$ and hence this last condition reduces to the condition that $\Lambda\subseteq\mathcal{Z}(F_{P})\cup\{\bar{0}\}$.\\

For a convex polytope $P$ in $\Bbb R^{d}$ and a subset $\Lambda\subset\Bbb R^{d}$ the set $E(\Lambda)$ is said to be an \textbf{orthogonal set of exponentials} for the polytope $P$ \textbf{with respect to a weight function} $W$ if
\vskip-0.2cm
$$\hskip-1.5cm\int_{P}e^{i\langle \lambda, x\rangle}\overline{e^{i\langle \lambda', x\rangle}}W(x)dx = \int_{P}e^{i\langle\lambda - \lambda', x\rangle}W(x)dx = 0, \forall\lambda, \lambda'\in\Lambda, \lambda\neq\lambda'.$$\\
Before formulating and proving the main results we would like to add one last remark which will be used during the text. If $P$ and $P'$ are two affinely equivalent convex polytopes in $\Bbb R^{d}$, i.e., there exist a non degenerate matrix $M\in\Bbb R^{d\times d}$ and a vector $v\in\Bbb R^{d}$ such that $P' = MP + v$, then $P$ has an infinite set of orthogonal exponentials if and only if the same is true for $P'$. This can be easily observed from equation (\ref{eq:1}) by making the change of variables $x = M^{-1}(y - v)$.

\section{The Main Results}

Theorems 2.1, 2.2 and 2.4 deal with the existence and construction of infinite sets of orthogonal exponentials for convex polytopes without considering weight functions. The first main result of this paper, Theorem 2.1, guarantees the existence of infinite sets of orthogonal exponentials for simple, convex polytopes which are also rational:

\begin{thm}

Every simple, convex and rational polytope $P$ has an infinite set of orthogonal exponentials.

\end{thm}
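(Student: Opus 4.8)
The plan is to produce an infinite \emph{rank-one lattice} $\Lambda = 2\pi\mathbb{Z}\,\xi$ sitting inside $\mathcal{Z}(F_P)\cup\{\bar 0\}$. By the criterion recorded in the excerpt, since a lattice satisfies $\Lambda-\Lambda=\Lambda$, it suffices to find a single rational direction $\xi\in\mathbb{Q}^d$ and a scaling so that $F_P(2\pi n\xi)=0$ for every nonzero integer $n$. The engine I would use is the classical vertex (Brion-type) expansion for the Fourier--Laplace transform of a \emph{simple} polytope: for $\omega$ outside the finitely many hyperplanes $\{\langle\omega,u\rangle=0\}$ determined by the edge directions, one has
\[
F_P(\omega)=\sum_{v\in V(P)}\frac{c_v\,e^{-i\langle v,\omega\rangle}}{\prod_{k=1}^{d}\langle\omega,u_{v,k}\rangle},
\]
where $u_{v,1},\dots,u_{v,d}$ are the $d$ edge directions emanating from the vertex $v$ (exactly $d$ of them, by simplicity) and $c_v$ are constants built from the corresponding determinants. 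Here I use simplicity in an essential way, since it is precisely what makes each vertex cone simplicial and gives this clean product over exactly $d$ linear factors.

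Next I would restrict $F_P$ to the line $t\mapsto t\xi$. Choosing $\xi\in\mathbb{Q}^d$ is possible so as to avoid all the finitely many hyperplanes $\langle\xi,u_{v,k}\rangle=0$ (a finite union of hyperplanes cannot contain the dense set of rational points), so every denominator is nonzero for $t\neq 0$. Substituting $\omega=t\xi$ collapses the formula to
\[
F_P(t\xi)=\frac{1}{(it)^{d}}\sum_{v\in V(P)} \frac{c_v}{\prod_{k}\langle\xi,u_{v,k}\rangle}\,e^{-it\alpha_v}=\frac{h(t)}{(it)^{d}},\qquad \alpha_v:=\langle v,\xi\rangle .
\]
Now rationality enters: because $P$ is rational and $\xi$ is rational, each frequency $\alpha_v=\langle v,\xi\rangle$ is rational, so after replacing $\xi$ by a suitable positive integer multiple I may assume every $\alpha_v\in\mathbb{Z}$. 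The finite exponential sum $h(t)=\sum_v (\,\cdots)e^{-it\alpha_v}$ is then $2\pi$-periodic.

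The key observation is that $h(0)=0$. This follows not from the vertex formula but directly from the definition $F_P(t\xi)=\int_P e^{-it\langle x,\xi\rangle}\,dx$, which is continuous in $t$ with $F_P(0)=|P|$ finite; hence $h(t)=(it)^d F_P(t\xi)\to 0$ as $t\to0$, forcing $h(0)=0$. Combining this with $2\pi$-periodicity yields $h(2\pi n)=h(0)=0$ for all $n\in\mathbb{Z}$, and therefore $F_P(2\pi n\xi)=h(2\pi n)/(2\pi i n)^{d}=0$ for every $n\neq 0$. Consequently $\Lambda=2\pi\mathbb{Z}\,\xi$ is an infinite rank-one lattice with $\Lambda-\Lambda=\Lambda\subseteq\mathcal{Z}(F_P)\cup\{\bar 0\}$, so $E(\Lambda)$ is the desired infinite orthogonal set. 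I expect the main obstacle to be setting up and justifying the vertex expansion of $F_P$ rigorously on the real line $\{t\xi\}$ (its meromorphic validity at the purely imaginary arguments $\omega=t\xi$, guaranteed by the genericity of $\xi$); once that formula is in hand, the periodization and the vanishing $h(0)=0$ are essentially automatic, and rationality serves only to convert the exponential sum into a genuinely periodic function.
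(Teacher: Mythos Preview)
Your proof is correct. Both arguments rest on the same vertex formula (the paper's Corollary~3.1) and the same companion identity $\sum_v D_v(\omega)=0$; you recover the latter neatly from the continuity of $F_P$ at the origin rather than citing it separately. The routes diverge after that point. The paper works with the full lattice $L=2\pi n\mathbb{Z}^d$ (where $n$ clears denominators), observes that $F_P$ vanishes on all of $L$ minus the finitely many ``bad'' hyperplanes orthogonal to the edges, and then extracts an infinite $\Lambda\subset L$ by a greedy inductive procedure ensuring that $\Lambda-\Lambda$ also avoids those hyperplanes; the resulting $\Lambda$ is \emph{not} a lattice. You instead restrict from the outset to a single generic rational direction $\xi$, obtaining a genuine rank-one lattice $\Lambda=2\pi\mathbb{Z}\,\xi$ on which $F_P$ vanishes identically. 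Your argument is shorter and produces a lattice (in spirit this is closer to the paper's proof of Theorem~2.2, which also outputs a rank-one lattice but via iterated divergence rather than the vertex formula), while the paper's construction exhibits a much larger supply of zeros of $F_P$ and sets up the density discussion in Proposition~4.1.
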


In the proof of Theorem 2.1 we also give an explicit construction of the set $\Lambda$ for which $E(\Lambda)$ is an infinite set of orthogonal exponentials for $P$.

The second main result of this paper, Theorem 2.2, guarantees the existence of an infinite set of orthogonal exponentials for polytopes which are not necessarily simple and with the requirement that only one component in the vertices of each polytope $P$ in consideration will be rational. However, this also comes with the additional requirement that the edges of $P$ must be "in general position" in some sense that will be rigourously defined below. The exact formulation of Theorem 2.2 is as follows.

\begin{thm}

Let $P$ be a convex polytope and assume that for some $k$, $1\leq k\leq d$, the $k^{th}$ component in each vertex of $P$ is rational and that non of the edges of $P$ are contained in a hyperplane of the form $x_{k} = c$ where $c\in\Bbb R$. Then, $P$ has an infinite set of orthogonal exponentials.

\end{thm}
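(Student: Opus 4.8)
The plan is to reduce the orthogonality condition to the existence of a single arithmetic progression of zeros of $F_P$ running along the $k$-th coordinate axis, and then to locate that progression by analysing the one-dimensional ``slice-volume'' function of $P$ in the direction $e_k$. Recall from the discussion preceding the statement that $E(\Lambda)$ is orthogonal for $P$ precisely when $\Lambda-\Lambda\subseteq\mathcal{Z}(F_P)\cup\{\bar 0\}$, and that we may assume $\bar 0\in\Lambda$. I would therefore try to produce a $\Lambda$ of the special form $\Lambda=\alpha\,\Bbb Z\,e_k=\{\alpha m e_k:m\in\Bbb Z\}$ for a suitable $\alpha>0$. For such a set $\Lambda-\Lambda=\Lambda$, so it suffices to arrange $F_P(\alpha m e_k)=0$ for every nonzero integer $m$. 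Writing $A(s)$ for the $(d-1)$-dimensional measure of the slice $P\cap\{x_k=s\}$, Fubini's theorem gives $F_P(te_k)=\int_P e^{-itx_k}\,dx=\int_{\Bbb R}e^{-its}A(s)\,ds=:\widehat{A}(t)$, so the whole problem becomes: find $\alpha$ with $\widehat{A}(\alpha m)=0$ for all $m\neq 0$.

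The function $A$ is supported on $[a,b]$, with $a=\min_{x\in P}x_k$ and $b=\max_{x\in P}x_k$, and it is a polynomial of degree at most $d-1$ on each interval between consecutive critical heights, these being exactly the distinct values of $x_k$ at the vertices of $P$; by hypothesis they are all rational. The key structural claim, and the place where the no-flat-edge hypothesis is used, is that \emph{under the assumption that no edge of $P$ lies in a hyperplane $\{x_k=c\}$, the $(d-1)$-st distributional derivative $A^{(d-1)}$ is an ordinary piecewise-constant function} (equivalently, for $d\geq 2$, $A\in C^{d-2}(\Bbb R)$).

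I would prove this claim by a local tangent-cone computation at each critical height $c$. Near a vertex $v$ with $x_k(v)=c$ the polytope coincides with $v+K_v$, where $K_v$ is the pointed tangent cone at $v$; the contribution of $v$ to $A(c+\tau)$ is, up to a $C^{\infty}$ remainder, the slice volume of $K_v$, which is positively homogeneous of degree $d-1$ in $\tau$ and hence lies in $C^{d-2}$, so it can only produce a jump in the top derivative $A^{(d-1)}$. The general principle is that a face of $P$ of dimension $j$ contained in the cutting hyperplane produces a jump in $A^{(d-1-j)}$; since a flat face of dimension $\geq 1$ would contain a flat edge, the hypothesis forces the only faces ever lying in a cutting hyperplane to be vertices $(j=0)$, which affect only $A^{(d-1)}$. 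Making this homogeneity-and-smoothness statement fully rigorous — in particular controlling non-simple vertices, and tangent cones whose slices are unbounded — is the step where I expect the real work to lie, and I regard it as the main obstacle of the proof.

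Granting the claim, the conclusion is immediate. Let $c_0<\dots<c_N$ be the critical heights and let $q$ be a common denominator, so that $qc_j\in\Bbb Z$ for all $j$. Since $A^{(d-1)}$ is piecewise constant with compact support, $A^{(d)}=\sum_{j}w_j\,\delta(\,\cdot-c_j)$, where $w_j$ is the jump of $A^{(d-1)}$ at $c_j$ and $\sum_j w_j=A^{(d-1)}(b^{+})-A^{(d-1)}(a^{-})=0$. Taking Fourier transforms, $(it)^d\widehat{A}(t)=\sum_j w_j e^{-itc_j}$, and at $t=2\pi q m$ each exponential equals $e^{-2\pi i m(qc_j)}=1$, whence $(2\pi i q m)^d\,\widehat{A}(2\pi q m)=\sum_j w_j=0$ and therefore $\widehat{A}(2\pi q m)=0$ for every $m\neq 0$. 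Taking $\alpha=2\pi q$ and $\Lambda=2\pi q\,\Bbb Z\,e_k$ then gives $\Lambda-\Lambda=\Lambda\subseteq\mathcal{Z}(F_P)\cup\{\bar 0\}$, so $E(\Lambda)$ is an infinite orthogonal set of exponentials for $P$, as required. (For $d=1$ the same computation applies verbatim, with $A^{(d-1)}=A$ the indicator of $[a,b]$.)
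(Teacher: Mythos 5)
Your overall architecture is sound and, at the level of strategy, coincides with the paper's: both proofs produce the rank-one lattice $\Lambda=2\pi q\,\Bbb Z\,e_{k}$ (the paper normalizes so that $q=1$ via an affine map), both reduce everything to the one-variable function $t\mapsto F_{P}(te_{k})=\widehat{A}(t)$, and both exploit that the breakpoints of $A$ are the rational $k$-th coordinates of the vertices. Your closing computation is correct, and you are right that you need the full strength of the regularity claim: if some lower-order derivative $A^{(l)}$, $l\leq d-2$, had jumps, then $(it)^{d}\widehat{A}(t)$ would acquire extra terms $\sum_{j}[A^{(l)}]_{c_{j}}(it)^{d-1-l}e^{-itc_{j}}$, and the sums $\sum_{j}[A^{(l)}]_{c_{j}}$ do not telescope to zero for $l<d-1$, so the cancellation at $t=2\pi qm$ would fail. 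Only the top-order jumps sum to zero.

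The genuine gap is exactly the step you flag yourself: the claim that the no-horizontal-edge hypothesis forces $A\in C^{d-2}$ is asserted with a heuristic, not proved, and the heuristic as stated does not go through directly. At a vertex $v$ with $x_{k}(v)=c$ that is not extremal in the $e_{k}$ direction, the tangent cone $K_{v}$ contains directions with both signs of the $k$-th component, so its slices are unbounded and ``the slice volume of $K_{v}$'' is not defined; one must instead work with a signed decomposition into cones that are pointed with respect to $e_{k}$ (a Lawrence--Varchenko/Brion-type polarization, which is available precisely because $x_{k}$ is nonconstant on every edge), and additionally triangulate at non-simple vertices. The claim is true, and the paper's proof is in effect a proof of it by different means: iterating the divergence theorem $d-1$ times expresses $F_{P}(\omega_{1},0,\dots,0)$ as $(-1/(i\omega_{1}))^{d-1}$ times a finite sum of terms $c\int_{[x,y]}e^{-is\omega_{1}}ds$ over segments $[x,y]$ with distinct integer endpoints (projections of chains of faces ending in edges), which is precisely the statement that $A^{(d-1)}$ is a finite signed combination of indicators of intervals with integer endpoints — i.e., your claim plus the rationality. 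So your proposal is a correct skeleton whose missing central lemma is the actual content of the paper's argument; to complete it you would either reproduce the paper's iterated divergence computation or carry out the polarized-cone decomposition carefully.
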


Theorem 2.2 will be proved by showing that the Fourier transform $F_{P}$, of the indicator function of $P$, vanishes on $\Lambda\setminus\{\bar{0}\}$ where $\Lambda$ is a lattice of rank one which is contained on the $k^{th}$ axis. Then, as was explained in the introduction, this will immediately imply that $E(\Lambda)$ forms an infinite set of orthogonal exponentials for $P$.

\begin{remark}

As was explained in the introduction, if $P$ and $P'$ are two affinely equivalent convex polytopes then $P$ has an infinite set of orthogonal exponentials if and only if the same is true for $P'$. Hence, Theorems 2.1 and 2.2 are actually true for a broader family of convex polytopes which are affinely equivalent to the ones which appear in their formulations. For example, every simple, convex polytope $P$ whose vertices lie on a full rank lattice $\Lambda$ has an infinite set of orthogonal exponentials. Indeed, obviously there exists an affine transformation which maps $\Lambda$ to $\Bbb Z^{d}$ and thus it will map $P$ to a polytope $P'$ whose vertices lie on $\Bbb Z^{d}$. Since any linear transformation preserves the convexity and simplicity properties of polytopes it follows that $P'$ is simple, convex and rational and thus has an infinite set of orthogonal exponentials which implies that the same is true for $P$.

\end{remark}

Theorem 2.2 guarantees that every polytope $P$, satisfying the conditions described in the theorem, has an infinite set $E(\Lambda)$ of orthogonal exponentials where $\Lambda$ is a lattice. In general, one would like to characterize the set of convex polytopes which have infinite sets $E(\Lambda)$ of orthogonal exponentials for some lattice $\Lambda$. While a complete characterization of such polytopes is not obtained in this paper, we give a necessary condition for the above property to hold for simple polytopes. The exact formulation is given in Theorem 2.4 below.

\begin{thm}

Let $P$ be a simple, convex polytope with an infinite set $E(\Lambda)$ of orthogonal exponentials and assume that $\Lambda$ is a (not necessarily a full rank) lattice. Then, for every $\omega_{0}$ in $\Lambda\setminus\{\bar{0}\}$ there exist two distinct vertices $v$ and $v'$ of $P$ and an integer $m$ such that $\langle \omega_{0}, v - v'\rangle = 2\pi m$.

\end{thm}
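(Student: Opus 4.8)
The plan is to use the lattice hypothesis to manufacture infinitely many zeros of $F_P$ along a single line and then to read off a congruence between two vertices from an explicit expansion of $F_P$. Since $\Lambda$ is a lattice we have $\Lambda\setminus\{\bar 0\}\subseteq\mathcal Z(F_P)$, as noted in the introduction, and $n\omega_0\in\Lambda$ for every $n\in\Bbb Z$; hence $F_P(n\omega_0)=0$ for all $n\in\Bbb Z\setminus\{0\}$. Writing $\theta_j=\langle\omega_0,v_j\rangle$ for the vertices $v_1,\dots,v_N$ of $P$, it suffices to show that the $\theta_j$ cannot all be pairwise distinct modulo $2\pi$: if $\theta=\langle\omega_0,v\rangle$ and $\theta'=\langle\omega_0,v'\rangle$ agree modulo $2\pi$ with $v\neq v'$, then $\langle\omega_0,v-v'\rangle=\theta-\theta'=2\pi m$ for an integer $m$. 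I would argue by contradiction, assuming all $\theta_j$ are distinct mod $2\pi$.

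First I would dispose of the generic directions, where $\omega_0$ is orthogonal to no edge of $P$. Simplicity means that at each vertex $v_j$ exactly $d$ edges meet, with linearly independent directions $u_{j,1},\dots,u_{j,d}$, and the Brion/Lawrence--Varchenko vertex expansion applies:
\[
F_P(\omega)=\sum_{j=1}^N\frac{\gamma_j\,e^{-i\langle\omega,v_j\rangle}}{\prod_{k=1}^d\langle\omega,u_{j,k}\rangle},\qquad\gamma_j\neq0,
\]
valid whenever no denominator vanishes, the constants $\gamma_j$ being nonzero exactly because the $u_{j,k}$ span $\Bbb R^d$. Substituting $\omega=n\omega_0$ and pulling out the factor $n^{-d}$ gives $\sum_{j}\beta_j e^{-in\theta_j}=0$ for all $n\neq0$, with $\beta_j=\gamma_j/\prod_k\langle\omega_0,u_{j,k}\rangle\neq0$. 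Putting $z_j=e^{-i\theta_j}$, the contradiction hypothesis makes the $z_j$ distinct, so the vanishing of $\sum_j\beta_j z_j^{\,n}$ for $n=1,\dots,N$ is a Vandermonde system forcing every $\beta_j=0$, which is absurd.

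The main obstacle is precisely the degenerate directions: if $\omega_0$ is orthogonal to some edge, the expansion above has vanishing denominators, and one cannot perturb $\omega_0$ without leaving $\Lambda$ and losing the zeros of $F_P$. To cover all $\omega_0$ uniformly I would project to one dimension. Let $\rho(s)=\tfrac{d}{ds}\,|\{x\in P:\langle\omega_0,x\rangle\le s\}|$ be the cross-sectional volume function, so that $F_P(t\omega_0)=\int_{\Bbb R}e^{-its}\rho(s)\,ds$. For a convex polytope $\rho$ is compactly supported, not identically zero, and piecewise polynomial of degree $\le d-1$ with breakpoints only at the vertex levels $\langle\omega_0,v_j\rangle$. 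Hence $\rho^{(d)}$ is supported on those breakpoints, $\rho^{(d)}=\sum_\beta\sum_{\ell=0}^{d-1}a_{\beta,\ell}\,\delta_\beta^{(\ell)}$, and transforming yields $(it)^d F_P(t\omega_0)=\sum_\beta p_\beta(it)\,e^{-i\beta t}$ with $\deg p_\beta\le d-1$.

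Evaluating at $t=n$ and using $F_P(n\omega_0)=0$ gives $\sum_\beta p_\beta(in)\,e^{-in\beta}=0$ for every $n\neq0$. Under the contradiction hypothesis the breakpoints $\beta$ are distinct mod $2\pi$, so the bases $e^{-i\beta}$ are distinct; a nonzero exponential polynomial with distinct bases cannot vanish at all positive integers (it satisfies a linear recurrence, and infinitely many consecutive zeros force it to vanish identically), so every $p_\beta\equiv0$, i.e. $\rho^{(d)}=0$. Then $\rho$ coincides with a single polynomial on all of $\Bbb R$ and, being compactly supported, vanishes---contradicting $|P|>0$. Thus two distinct vertex levels must coincide modulo $2\pi$, giving distinct vertices $v,v'$ with $\langle\omega_0,v-v'\rangle=2\pi m$ (indeed $m\neq0$). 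The only technical points are the piecewise-polynomial structure and breakpoint location of $\rho$ and the exponential-polynomial independence on $\Bbb Z\setminus\{0\}$; note that simplicity is what makes the clean Brion route available, while the cross-section argument uses only convexity.
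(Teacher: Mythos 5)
Your argument is correct, but after the generic case it departs from the paper's route, and the departure is instructive. The paper splits into exactly two cases: if $\omega_{0}$ is orthogonal to some edge of $P$, then the two endpoints $v,v'$ of that edge already give $\langle \omega_{0}, v-v'\rangle = 0 = 2\pi\cdot 0$, so the theorem holds trivially with $m=0$; otherwise it runs essentially your first argument --- the vertex expansion (\ref{eq:92}), the $(-d)$-homogeneity of $D_{v}$, the companion identity (\ref{eq:91}) with $j=0$ to supply the $r=0$ equation, and a Vandermonde system forcing two of the numbers $e^{-i\langle \omega_{0},v_{j}\rangle}$ to coincide (your use of $n=1,\dots,N$ rather than $r=0,\dots,N-1$ is an immaterial variant). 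You flagged the degenerate directions as ``the main obstacle'' and built a second, self-contained proof via the cross-sectional density $\rho$, its piecewise-polynomial structure with breakpoints among the vertex levels, the distributional identity $(it)^{d}F_{P}(t\omega_{0})=\sum_{\beta}p_{\beta}(it)e^{-i\beta t}$, and the nonvanishing of exponential polynomials with distinct bases on the positive integers; all of these steps are sound. But the heavy machinery is unnecessary there: in the degenerate case the conclusion is immediate with $m=0$, an observation you overlooked (and which also makes your parenthetical claim that $m\neq 0$ unjustified --- the coinciding levels may be equal, and the theorem only asks for some integer $m$). What your cross-section argument buys in exchange is real, though: it treats all $\omega_{0}$ uniformly, avoids the Lawrence--Gravin formula entirely, and, as you note, uses only convexity rather than simplicity, so it in fact proves a slightly stronger statement than Theorem 2.4 as formulated.
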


Geometrically, Theorem 2.4 implies that each point in $\Lambda\setminus\{\bar{0}\}$ must be contained on a prescribed set of hyperplanes determined by the vertices of $P$. The necessary condition in Theorem 2.4 does not take the rank of the lattice $\Lambda$ into consideration which suggests that this condition might not be sharp (i.e., not a sufficient condition). It will be interesting to obtain a stronger version of Theorem 2.4 which involves the rank of $\Lambda$ as well (i.e., the higher the rank of $\Lambda$ the more conditions are imposed in the necessary condition).

Theorems 2.1 and 2.2 guarantee the existence of an infinite set of orthogonal exponentials $E(\Lambda)$ under some conditions on the polytopes in consideration. However, if one also considers the density of $\Lambda$ then unfortunately the constructions given in these theorems all ways produce sets $\Lambda$ of zero density. Indeed, in the proof of Theorem 2.2 the set $\Lambda$ being constructed is a lattice of rank one which obviously has a zero density. The proof that the general construction presented in the proof of Theorem 2.1 also produces sets $\Lambda$ of zero density is more complicated and is given in Proposition 4.1 in the Appendix.

Hence, our next aim is to try to find families $\Omega$ of polytopes for which one can construct an infinite set of orthogonal exponentials $E(\Lambda)$ where $\Lambda$ has a positive density and to try to estimate the density $D(\Lambda)$ of $\Lambda$. In this paper we show how to obtain such sets $\Lambda$ for zonotopes where the set $E(\Lambda)$ will be orthogonal with respect to a weight function which depends on each zonotope in consideration. As was mentioned in the introduction, for a positive integer $m$, a zonotope $P$ of dimension $m$ is an orthogonal projection of a set of the form $MC_{d}$ on $\Bbb R^{m}$ where $d - 1\geq m$ and where $M$ is a non degenerate $d\times d$ matrix. In order to guarantee the construction of the set $E(\Lambda)$ of orthogonal exponentials one has to assume non trivial conditions on the matrix $M$ and hence the family $\Omega$ of polytopes, for which the construction of the set $E(\Lambda)$ of orthogonal exponentials can be applied, is strictly contained in the set $\Omega'$ of all zonotopes. However, since the construction can be applied whenever $M$ is rational and non degenerate it follows that $\Omega$ is dense in $\Omega'$.

The exact formulation of the main result mentioned above is given in Theorem 2.5 below. Before formulating Theorem 2.5 we use the notation $\mathbf{H}_{x_{1},...,x_{m}}, 1\leq m\leq d - 1$ for the $d - m$ dimensional plane in $\Bbb R^{d}$ given as
\vskip-0.2cm
$$\hskip-6cm \mathbf{H}_{x_{1},...,x_{m}} = \{u\in\Bbb R^{d}:u_{1} = x_{1},...,u_{m} = x_{m}\}.$$

\begin{thm}

Let $d$ and $m$ be positive integers such that $m\leq d - 1$ and let $M$ be a nondegenerate matrix in $\Bbb R^{d\times d}$ which satisfies the condition that there exist $m$ linearly independent vectors $\bar{k}_{1},...,\bar{k}_{m}\in\Bbb Z^{d}\setminus\{\bar{0}\}$ such that
\vskip-0.2cm
\begin{equation}\label{eq:20}\hskip-4.5cm\langle \bar{k}_{j}, M^{-1}e_{i}\rangle = 0, i = m + 1, m + 2,...,d, j = 1,...,m.\end{equation}
Then, the orthogonal projection of $MC_{d}$ to $\Bbb R^{m}$ has an infinite set of orthogonal exponentials $E(\Lambda)$, with respect to a weight function $W_{M, m}$, where $\Lambda$ is given by
$$\hskip0.8cm\Lambda = \left\{\pi\left(\langle \bar{k}, M^{-1}e_{1}\rangle,...,\langle \bar{k}, M^{-1}e_{m}\rangle\right):\bar{k}\in\Bbb Z^{d}, \langle \bar{k}, M^{-1}e_{i}\rangle = 0, i = m + 1, m + 2,...,d\right\}$$
and satisfies $D(\Lambda) \geq \pi^{-d}|\det M| / |\det U|$ where
$$\hskip-4.1cm U = \left(\begin{array}{c}
K\\
e_{m + 1}M\\
...\\
e_{d}M
\end{array}\right)\hskip0.1cm\textit{and}\hskip0.1cm\textit{where}\hskip0.2cm K = \left(\begin{array}{c}
\bar{k}_{1}\\
...\\
\bar{k}_{m}
\end{array}\right)\in\Bbb Z^{m\times d}.$$
The weight function $W_{M, m}$ is given by
\vskip-0.2cm
$$\hskip-3.9cm W_{M,m}(x_{1},...,x_{m}) = \int_{MC_{d}\cap \mathbf{H}_{x_{1},...,x_{m}}}dx_{m + 1}dx_{m + 2}...dx_{d}.$$

\end{thm}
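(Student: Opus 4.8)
The plan is to exploit the fact that the weight $W_{M,m}$ is precisely the marginal of Lebesgue measure on $MC_{d}$ along the fibers $\mathbf{H}_{x_{1},\dots,x_{m}}$, so that integrating any function of $(x_{1},\dots,x_{m})$ against $W_{M,m}$ over the projection $P$ is the same as integrating it over the full body $MC_{d}$. Writing $w(\bar{k})$ for the vector whose $j$-th coordinate is $\langle\bar{k},M^{-1}e_{j}\rangle$ (so $w(\bar{k})=(M^{-1})^{T}\bar{k}$), a point $\lambda\in\Lambda$ is $\pi$ times the first $m$ coordinates of $w(\bar{k})$, and for $\lambda,\lambda'$ coming from $\bar{k},\bar{k}'$ one has $\lambda-\lambda'=\pi(w_{1}'',\dots,w_{m}'')$ with $w''=w(\bar{k}-\bar{k}')$. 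Fubini's theorem then gives
$$\int_{P}e^{i\langle\lambda-\lambda',(x_{1},\dots,x_{m})\rangle}W_{M,m}(x_{1},\dots,x_{m})\,dx_{1}\cdots dx_{m}=\int_{MC_{d}}e^{i\pi\sum_{j=1}^{m}w_{j}''x_{j}}\,dx.$$
Because every $\bar{k}$ admitted in the definition of $\Lambda$ satisfies $\langle\bar{k},M^{-1}e_{i}\rangle=0$ for $i>m$, the vector $w''$ has vanishing last $d-m$ coordinates, so the exponent equals $\pi\langle w'',x\rangle$ with the full $\Bbb R^{d}$ inner product.

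First I would substitute $x=My$, which maps $MC_{d}$ onto the cube $C_{d}=[-1,1]^{d}$ and turns the frequency into $M^{T}w''=\bar{k}-\bar{k}'=:\bar{k}''\in\Bbb Z^{d}$. The integral factors as a product of one-dimensional integrals $\int_{-1}^{1}e^{i\pi k_{l}''y_{l}}\,dy_{l}$, each of which vanishes whenever $k_{l}''$ is a nonzero integer. Since $(M^{-1})^{T}$ is invertible and $w''$ is determined by its first $m$ coordinates, $\lambda\neq\lambda'$ is equivalent to $\bar{k}''\neq\bar{0}$, so at least one factor vanishes and the whole integral is $0$; this is exactly the orthogonality of $E(\Lambda)$ with respect to $W_{M,m}$. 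That $\Lambda$ is infinite follows because $L=\Bbb Z^{d}\cap V$, with $V$ the subspace cut out by $\langle\cdot,M^{-1}e_{i}\rangle=0$ $(i>m)$, is a lattice of rank exactly $m\geq1$: the $d-m$ functionals are independent (the columns $M^{-1}e_{i}$ are), so $\dim V=m$, and the hypothesis supplies $m$ independent integer points in $V$, forcing $V$ to be rational; finally $\bar{k}\mapsto\pi(w_{1},\dots,w_{m})$ is injective on $L$.

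For the density bound I would note that $\Lambda$ is the image of $L$ under this injective map, hence a full-rank lattice in $\Bbb R^{m}$ with $D(\Lambda)=1/\operatorname{covol}(\Lambda)$. Rather than compute $\operatorname{covol}(\Lambda)$ directly, I would pass to the sublattice $\Lambda_{0}\subseteq\Lambda$ generated by the images of the given vectors $\bar{k}_{1},\dots,\bar{k}_{m}$; since a refinement only shrinks the covolume, $\operatorname{covol}(\Lambda)\leq\operatorname{covol}(\Lambda_{0})$. Collecting the vectors $w^{(j)}=w(\bar{k}_{j})$ into the $m\times d$ matrix $\widetilde{W}=[\,W\mid 0\,]$ (the last $d-m$ columns vanish by hypothesis), the generator matrix of $\Lambda_{0}$ is $\pi W$, so $\operatorname{covol}(\Lambda_{0})=\pi^{m}|\det W|$.

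The remaining step is the determinant identity relating $W$ to $U$. From $\bar{k}_{j}=M^{T}w^{(j)}$ one gets $K=\widetilde{W}M$, and stacking with the rows $e_{i}M$ factors $U=\widetilde{U}M$ where $\widetilde{U}=\left(\begin{smallmatrix}W&0\\0&I_{d-m}\end{smallmatrix}\right)$ is block diagonal with $\det\widetilde{U}=\det W$; hence $|\det U|=|\det W|\,|\det M|$ and $\operatorname{covol}(\Lambda_{0})=\pi^{m}|\det U|/|\det M|$. Using $\pi>1$ and $m\leq d$ to replace $\pi^{m}$ by $\pi^{d}$ yields $\operatorname{covol}(\Lambda)\leq\pi^{d}|\det U|/|\det M|$, i.e. $D(\Lambda)\geq\pi^{-d}|\det M|/|\det U|$, as claimed. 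The conceptual heart is the Fubini reduction of the weighted integral to one over $MC_{d}$; the part demanding the most care is the bookkeeping in the density estimate — tracking transposes, verifying the block structure of $\widetilde{U}$, and justifying the covolume comparison between $\Lambda$ and its sublattice $\Lambda_{0}$.
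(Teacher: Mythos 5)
Your proposal is correct and takes essentially the same route as the paper: the Fubini reduction of the weighted integral over the projection to an integral over $MC_{d}$, the change of variables $x=My$ reducing orthogonality to the product formula for half-integer frequencies on the cube $C_{d}$, and the density bound via the sublattice generated by the images of $\bar{k}_{1},\dots,\bar{k}_{m}$ together with the block-matrix identity $U=\widetilde{U}M$ (which the paper writes as $\left(\begin{smallmatrix}A&0\\0&I\end{smallmatrix}\right)=UM^{-1}$). The only differences are cosmetic: the paper pushes forward from the cube to the projection while you pull back, and you make explicit the (correct) observations that $\Lambda$ itself is a full-rank lattice in $\Bbb R^{m}$ and that passing to a sublattice can only increase the covolume, where the paper simply invokes $\pi\Lambda'\subseteq\Lambda$.
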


In Lemma 4.2 in the Appendix we give necessary and sufficient conditions on the matrix $M$ so that condition (\ref{eq:20}) is satisfied. In particular, Lemma 4.2 guarantees that condition (\ref{eq:20}) is satisfied if $M$ is rational and for the case where $m = d - 1$ then condition (\ref{eq:20}) is satisfied if and only if the last column of $M^{-1}$ is proportional to an integer vector.

\section{The Proofs of the Main Results and Related Examples}

The proofs of Theorems 2.1 and 2.4 are based on the following explicit formula (\ref{eq:92}) which expresses the Fourier transform of the indicator function of a simple convex polytope $P$ via its vertices. Formula (\ref{eq:92}) is obtained by considering the result obtained in \cite{Gravin, Lawrence} for the calculation of the moments of a simple polytope $P$ in $\Bbb R^{d}$:
\vskip-0.2cm
\begin{equation}\label{eq:90}\hskip-2.5cm\int_{P}\langle x,\omega\rangle^{j}dx = \frac{j!(-1)^{d}}{(j + d)!}\sum_{v\in\mathrm{Vert}(P)}\langle v, \omega\rangle^{j + d}D_{v}(z), j\in\Bbb N\cup\{0\},\end{equation}
where
\vskip-0.2cm
$$\hskip-7.65cm D_{v}(\omega) = \frac{|\det(\xi_{1}(v),...,\xi_{d}(v))|}{\langle \xi_{1}(v), \omega\rangle...\langle \xi_{d}(v), \omega\rangle}$$
and where $\xi_{1}(v),...,\xi_{d}(v)$ are the edge vectors emanating from the vertex $v$ (i.e., if $v_{1},...,v_{d}$ are the adjacent vertices to $v$ then one can choose $\xi_{i}(v) = v_{i} - v, i = 1,...,d$). Formula (\ref{eq:90}) is true for every $\omega$ in $\Bbb R^{d}$ for which the denominator of the function $D_{v}$, for each vertex $v$, does not vanish. Hence, using formula (\ref{eq:90}), the Taylor expansion of the exponential function $x\mapsto\exp(x)$ for the function $\omega\mapsto \exp(-i\langle\cdot, \omega\rangle)$ and using also the following companion identities
\vskip-0.2cm
\begin{equation}\label{eq:91}\hskip-7cm\sum_{v\in\textrm{Vert}(P)}\langle v, \omega\rangle^{j}D_{v}(\omega) = 0, 0\leq j\leq d - 1\end{equation}
(see \cite[Eq. 3]{Gravin}) we have the following corollary.

\begin{cor}

Let $P$ be a simple convex polytope in $\Bbb R^{d}$ and let $F_{P}$ be the Fourier transform of the indicator function of $P$. Then, for every point $\omega$ which is not orthogonal to any of the edges of $P$ we have that
\vskip-0.2cm
\begin{equation}\label{eq:92}\hskip-7.5cm F_{P}(\omega) = i^{-d}\sum_{v\in\mathrm{Vert}(P)}D_{v}(\omega)e^{-i\langle v, \omega\rangle}.\end{equation}

\end{cor}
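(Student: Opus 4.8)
The plan is to obtain formula (\ref{eq:92}) by expanding the exponential $e^{-i\langle x,\omega\rangle}$ in its Taylor series, integrating term by term against the moment formula (\ref{eq:90}), and then resumming the resulting power series back into exponentials, one per vertex. Throughout, the hypothesis that $\omega$ is not orthogonal to any edge of $P$ ensures that every denominator $\langle\xi_{i}(v),\omega\rangle$ in $D_{v}(\omega)$ is nonzero, so that all quantities appearing in (\ref{eq:90}) and (\ref{eq:91}) are well defined at this particular $\omega$.

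First I would write
$$F_{P}(\omega) = \int_{P}e^{-i\langle x,\omega\rangle}\,dx = \sum_{j=0}^{\infty}\frac{(-i)^{j}}{j!}\int_{P}\langle x,\omega\rangle^{j}\,dx,$$
where the interchange of summation and integration is justified because $P$ is bounded, so $\langle x,\omega\rangle$ is bounded on $P$ and the Taylor series converges uniformly there. Substituting the moment formula (\ref{eq:90}) and cancelling the factor $j!$ gives
$$F_{P}(\omega) = (-1)^{d}\sum_{j=0}^{\infty}\frac{(-i)^{j}}{(j+d)!}\sum_{v\in\mathrm{Vert}(P)}\langle v,\omega\rangle^{j+d}D_{v}(\omega).$$

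Next I would reindex with $k=j+d$, so the series runs over $k\geq d$, and extract the constant using $(-i)^{k-d}=(-i)^{k}/(-i)^{d}$ together with $(-i)^{d}=(-1)^{d}i^{d}$; this collapses the prefactor $(-1)^{d}$ into $i^{-d}$, since $(-1)^{d}i^{d}=(-i)^{d}=i^{-d}$. The crucial step is then to lower the range of summation from $k\geq d$ down to $k\geq 0$: the companion identities (\ref{eq:91}) assert precisely that $\sum_{v}\langle v,\omega\rangle^{k}D_{v}(\omega)=0$ for $0\leq k\leq d-1$, so adjoining those terms changes nothing. With the series now running over all $k\geq 0$, I would interchange the finite vertex sum with the series in $k$ and recognise the inner series $\sum_{k\geq 0}(-i\langle v,\omega\rangle)^{k}/k!$ as $e^{-i\langle v,\omega\rangle}$, obtaining $F_{P}(\omega)=i^{-d}\sum_{v}D_{v}(\omega)e^{-i\langle v,\omega\rangle}$, which is exactly (\ref{eq:92}).

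The only technical points to watch are the two interchanges of summation. The term-by-term integration is controlled by the boundedness of $P$; the rearrangement of the double sum over $k$ and over the vertices is controlled by the absolute convergence of $\sum_{k}|\langle v,\omega\rangle|^{k}/k!=e^{|\langle v,\omega\rangle|}$ for each of the finitely many vertices $v$, which again relies on the hypothesis that $\omega$ is not orthogonal to any edge so that each $D_{v}(\omega)$ is finite. I do not expect any genuine obstacle beyond careful bookkeeping of the constant $(-1)^{d}i^{d}=i^{-d}$ and the use of (\ref{eq:91}) to extend the summation range; the heart of the argument is the observation that the moment formula and the companion identities together convert the Taylor expansion of $F_{P}$ into a sum of vertex exponentials.
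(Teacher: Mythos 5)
Your proposal is correct and follows essentially the same route the paper indicates: Taylor-expand $e^{-i\langle x,\omega\rangle}$, integrate term by term via the moment formula (\ref{eq:90}), use the companion identities (\ref{eq:91}) to extend the sum from $k\geq d$ down to $k\geq 0$, and resum into $e^{-i\langle v,\omega\rangle}$, with the constant $(-1)^{d}(-i)^{-d}=i^{-d}$ handled correctly. The paper only sketches this in one sentence before stating the corollary, so your write-up is in fact a more complete version of the same argument.
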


\textbf{Proof of Theorem 2.1}: Let $F_{P}$ be the Fourier transform of the indicator function of $P$. Using formula (\ref{eq:92}), for every point $\omega$ which is not orthogonal to any of the edges of $P$ we have
\vskip-0.2cm
$$\hskip-7.4cm F_{P}(\omega) = i^{-d}\sum_{v\in\mathrm{Vert}(P)}D_{v}(\omega)e^{-i\langle v, \omega\rangle}.$$
Since $P$ is rational it follows that there exists a positive integer $n$ such that every vertex $v$ in $\mathrm{Vert}(P)$ can be written as
\vskip-0.2cm
$$\hskip-5.2cm v = \left(\frac{m_{1}^{v}}{n},...,\frac{m_{d}^{v}}{n}\right)\hskip0.2cm\mathrm{where}\hskip0.3cm m_{i}^{v}\in\Bbb Z, 1\leq i\leq d.$$
Hence, for every point $\omega$ of the form $\omega = 2\pi n(k_{1},...,k_{d})$, where $k_{i}\in\Bbb Z, 1\leq i\leq d$,
such that $\omega$ is not orthogonal to any of the edges of $P$, we have
$$F_{P}(2\pi nk_{1},...,2\pi nk_{d}) = i^{-d}\sum_{v\in\mathrm{Vert}(P)}D_{v}(2\pi nk_{1},...,2\pi nk_{d})e^{-2\pi i(k_{1}m_{1}^{v} + ... + k_{d}m_{d}^{v})}$$
\begin{equation}\label{eq:100}\hskip1.3cm = i^{-d}\sum_{v\in\mathrm{Vert}(P)}D_{v}(2\pi nk_{1},...,2\pi nk_{d}) = 0\end{equation}
where in the last passage we used identity (\ref{eq:91}) for $j = 0$. Let us denote by $H_{1},...,H_{N}$ the set of hyperplanes, which pass through the origin in $\Bbb R^{d}$, on which the denominators of the functions $D_{v} = D_{v}(\omega)$ ($v\in\mathrm{Vert}(P)$) vanish and let us define the following lattice
\vskip-0.2cm
$$\hskip-3.1cm L = \{(2\pi nk_{1},...,2\pi nk_{d}):k_{i}\in\Bbb Z, 1\leq i\leq d\} = 2\pi n\cdot \Bbb Z^{d}.$$

\begin{figure}[t]

    \caption{An illustration of the construction of the set $\Lambda$ for the convex polygon in the plane with vertices $(\pm1, 0), (\pm2, 1)$. In this case the lattice $L$ consists of the blue points, $N = 3$ and the lines $H_{1}, H_{2}$ and $H_{3}$ are given respectively by $\omega_{1} = 0, \omega_{1} - \omega_{2} = 0, \omega_{1} + \omega_{2} = 0$. Observe that each time we choose a point $p$ from $L$ we have to omit all the points in $L$ which are contained on the lines which are parallel to $H_{1}, H_{2}$ and $H_{3}$ and pass through $p$ (except of course the point $p$).}

    \begin{tikzpicture}

        \draw [<->] (-5, 0) -- (5, 0);
        \draw [<->] (0, -5) -- (0, 5);
        \draw [dashed, line width = 1.5, red] (0, -4.8) -- (0, 4.8);
        \draw [dashed, line width = 1, red] (-4.8, -4.8) -- (4.8, 4.8);
        \draw [dashed, line width = 1, red] (-4.8, 4.8) -- (4.8, -4.8);

        \draw [dashed, line width = 1, orange] (3, -4.8) -- (3, 4.8);
        \draw [dashed, line width = 1, orange] (-3, -5) -- (5, 3);
        \draw [dashed, line width = 1, orange] (- 1, 5) -- (5, -1);

        \draw [fill = blue] (0, 0) circle [radius = 0.05];
        \draw [fill = blue] (1, 1) circle [radius = 0.05];
        \draw [fill = blue] (- 1, - 1) circle [radius = 0.05];
        \draw [fill = blue] (2, 2) circle [radius = 0.05];
        \draw [fill = blue] (- 2, - 2) circle [radius = 0.05];
        \draw [fill = blue] (3, 3) circle [radius = 0.05];
        \draw [fill = blue] (- 3, - 3) circle [radius = 0.05];
        \draw [fill = blue] (4, 4) circle [radius = 0.05];
        \draw [fill = blue] (- 4, - 4) circle [radius = 0.05];

        \draw [fill = blue] (0, 2) circle [radius = 0.05];
        \draw [fill = blue] (1, 3) circle [radius = 0.05];
        \draw [fill = blue] (- 1, 1) circle [radius = 0.05];
        \draw [fill = blue] (2, 4) circle [radius = 0.05];
        \draw [fill = blue] (- 2, 0) circle [radius = 0.05];
        \draw [fill = blue] (3, 5) circle [radius = 0.05];
        \draw [fill = blue] (- 3, - 1) circle [radius = 0.05];
        \draw [fill = blue] (- 4, - 2) circle [radius = 0.05];

        \draw [fill = blue] (0, 4) circle [radius = 0.05];
        \draw [fill = blue] (1, 5) circle [radius = 0.05];
        \draw [fill = blue] (- 1, 3) circle [radius = 0.05];
        \draw [fill = blue] (- 2, 2) circle [radius = 0.05];
        \draw [fill = blue] (- 3, 1) circle [radius = 0.05];
        \draw [fill = blue] (- 4, 0) circle [radius = 0.05];

        \draw [fill = blue] (- 1, 5) circle [radius = 0.05];
        \draw [fill = blue] (- 2, 4) circle [radius = 0.05];
        \draw [fill = blue] (- 3, 3) circle [radius = 0.05];
        \draw [fill = blue] (- 4, 2) circle [radius = 0.05];

        \draw [fill = blue] (- 3, 5) circle [radius = 0.05];
        \draw [fill = blue] (- 4, 4) circle [radius = 0.05];

        \draw [fill = blue] (0, -2) circle [radius = 0.05];
        \draw [fill = blue] (1, - 1) circle [radius = 0.05];
        \draw [fill = blue] (- 1, - 3) circle [radius = 0.05];
        \draw [fill = blue] (2, 0) circle [radius = 0.05];
        \draw [fill = blue] (- 2, - 4) circle [radius = 0.05];
        \draw [fill = blue] (3, 1) circle [radius = 0.05];
        \draw [fill = blue] (- 3, - 5) circle [radius = 0.05];
        \draw [fill = blue] (4, 2) circle [radius = 0.05];

        \draw [fill = blue] (0, -4) circle [radius = 0.05];
        \draw [fill = blue] (1, - 3) circle [radius = 0.05];
        \draw [fill = blue] (- 1, - 5) circle [radius = 0.05];
        \draw [fill = blue] (2, -2) circle [radius = 0.05];
        \draw [fill = blue] (3, - 1) circle [radius = 0.05];
        \draw [fill = blue] (4, 0) circle [radius = 0.05];

        \draw [fill = blue] (1, - 5) circle [radius = 0.05];
        \draw [fill = blue] (2, -4) circle [radius = 0.05];
        \draw [fill = blue] (3, - 3) circle [radius = 0.05];
        \draw [fill = blue] (4, -2) circle [radius = 0.05];

        \draw [fill = blue] (3, - 5) circle [radius = 0.05];
        \draw [fill = blue] (4, - 4) circle [radius = 0.05];

    \end{tikzpicture}

\end{figure}
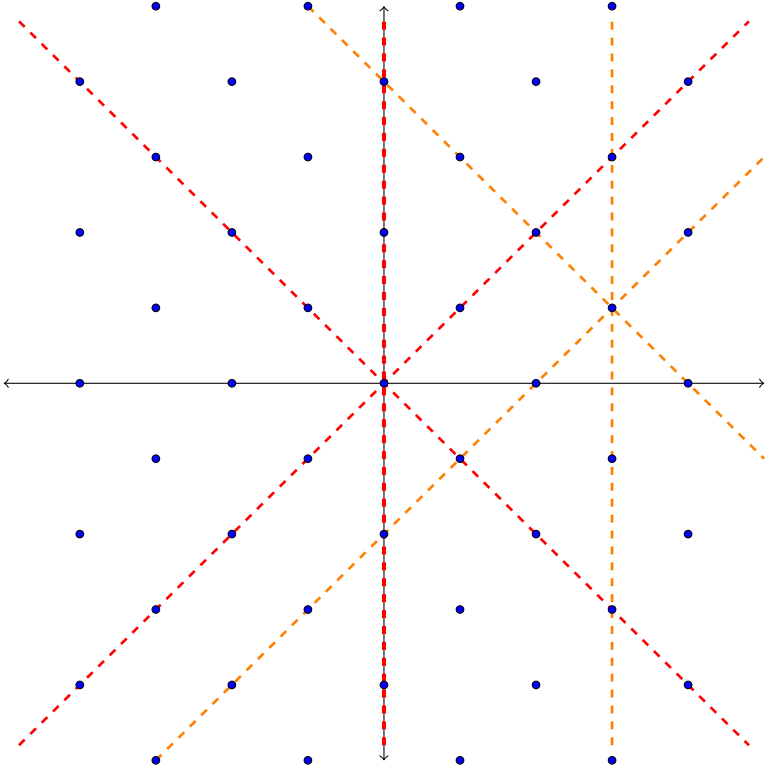
\vskip-0.01cm
\hskip-0.625cm Then, from equation (\ref{eq:100}) it follows that
\vskip-0.2cm
\begin{equation}\label{eq:101}\hskip-7.8cm L\setminus(H_{1}\cup...\cup H_{N})\subseteq\mathcal{Z}(F_{p}).\end{equation}
Now, let us construct the set $E(\Lambda)$ of orthogonal exponentials for $P$. Let us assume, without loss of generality, that $\bar{0}\in\Lambda$. Then, from equation (\ref{eq:101}) it follows that if we can find an infinite subset $\Lambda$ such that $\bar{0}\in \Lambda$ and $\Lambda$ satisfies the condition that
\vskip-0.2cm
\begin{equation}\label{eq:102}\hskip-4cm \Lambda - \Lambda\subset \left[L\setminus(H_{1}\cup...\cup H_{N})\right]\cup\{\bar{0}\}\subseteq\mathcal{Z}(F_{p})\cup\{\bar{0}\}\end{equation}
then $E(\Lambda)$ will be an infinite set of orthogonal exponentials for $P$. Let us write
$\Lambda = \{\bar{0}, x_{1}, x_{2},...\}$ where our aim is to construct the points $x_{i}\in\Lambda, i\geq 1$ such that condition (\ref{eq:102}) is satisfied. Since $\bar{0}\in \Lambda$ the point $x_{1}$ must be chosen from the set
\vskip-0.2cm
$$\hskip-8.5cm \Lambda_{1} = L\setminus(H_{1}\cup...\cup H_{N}).$$
Observe that the set $\Lambda_{1}$ must be infinite since $L$ is a lattice and then we omit only a finite set of hyperplanes. Hence, there exists a point $x_{1}\in \Lambda_{1}$. Next, since $\bar{0}$ and $x_{1}$ are in $L$ then in order for condition (\ref{eq:102}) to be satisfied the point $x_{2}$ must be chosen from the set
\vskip-0.2cm
$$\hskip-6cm \Lambda_{2} = \Lambda_{1}\setminus((x_{1} + H_{1})\cup...\cup(x_{1} + H_{N}))$$
$$\hskip-2.55cm = L\setminus(H_{1}\cup...\cup H_{N}\cup(x_{1} + H_{1})\cup...\cup(x_{1} + H_{N})).$$
Again, since we omit only a finite set of hyperplanes from $L$ it follows that $\Lambda_{2}$ must be infinite and thus there exists a point $x_{2}\in \Lambda_{2}$. Continuing in this way we can construct the whole set $\Lambda$. This proves Theorem 2.1.$\square$\\

%///////////////////////////////////////////////////////////////////////////////////////////////////////////////////////////////////////////////////////
%///////////////////////////////////////////////////////////////////////////////////////////////////////////////////////////////////////////////////////
%///////////////////////////////////////////////////////////////////////////////////////////////////////////////////////////////////////////////////////
%///////////////////////////////////////////////////////////////////////////////////////////////////////////////////////////////////////////////////////
%///////////////////////////////////////////////////////////////////////////////////////////////////////////////////////////////////////////////////////

\textbf{Proof of Theorem 2.2}: Before proving Theorem 2.2 let us introduce some notation. For a $d$ dimensional polytope $P$ we denote by $\mathbf{Face}(P)$ the set of all its facets (i.e., the faces of $P$ of dimension $d - 1$). For $1\leq k\leq d$ denote by $x_{k}^{\perp}$ the complementary space to the space spanned by $e_{k}$ and denote by $\mathbf{Proj}_{k}$ the orthogonal projection on the $k^{th}$ axis, i.e., $\mathbf{Proj}_{k}(x_{1},...,x_{d}) = x_{k}$. For a set of $k$ indices $1\leq i_{1} < ... < i_{k}\leq d$ denote by $\mathbf{Proj}^{\perp}_{i_{1},...,i_{k}}$ the orthogonal projection to the complementary space $X^{\perp}$ of $X$ where $X$ is the linear space spanned by $e_{i_{1}},...,e_{i_{k}}$, i.e.,
$$\mathbf{Proj}^{\perp}_{i_{1},...,i_{k}}(x_{1},...,x_{d}) = (x_{1},...,x_{i_{1} - 1},x_{i_{1} + 1},...,x_{i_{2} - 1},x_{i_{2} + 1},.......,x_{i_{k} + 1},...,x_{d}).$$
Define $d\hat{x}_{i_{1},...,i_{k}}$ to be the infinitesimal volume measure obtained from $dx_{i}, 1\leq i\leq d,$ where the terms $dx_{i_{1}},...,dx_{i_{k}}$ are omitted, i.e.,
$$\hskip-3cm d\hat{x}_{i_{1},...,i_{k}} = dx_{1}...dx_{i_{1} - 1}dx_{i_{1} + 1}...dx_{i_{2} - 1}dx_{i_{2} + 1}......dx_{i_{k} + 1}...dx_{d}.$$
Lastly, observe that there exists a unique exterior unit normal $\hat{n}$ for each point $p$ which is contained in the relative interior of a facet $\mathbf{F}$ of a convex polytope $P$. This normal $\hat{n}$ does not change if one varies the point $p$ in the relative interior of $\mathbf{F}$ and hence $\hat{n}$ depends only on $\mathbf{F}$ and thus we can denote it by $\nu(\mathbf{F})$. We denote by $\nu_{1}(\mathbf{F})$,...,$\nu_{d}(\mathbf{F})$ its components. With this notation we are ready to prove Theorem 2.2.\\

Let us assume, without loss of generality, that $k = 1$. We can further assume that the first component of each vertex of $P$ is an integer. Indeed, if $N$ is a common denominator of all the rational first components in all the vertices of $P$ then the affine transformation $\varphi:x\mapsto Nx$ will map the convex polytope $P$ to a convex polytope $P'$ whose vertices have the property that their first component is an integer. Now, we can use the fact that the property of having an infinite set of orthogonal exponentials is invariant with respect to affine transformations and that the property that non of the edges of $P$ are contained in a hyperplane of the form $x_{1} = c$ is preserved under the map $\varphi$. Also, observe that the assumption on the edges of $P$ is equivalent to the assumption that non of the faces of $P$ of dimension greater or equal to $1$ are contained in a hyperplane of the form $x_{1} = c$.\\

From the divergence formula we have, for $\omega_{1}\neq0$, that
\begin{equation}\label{eq:110}\hskip-4cm\int_{P}e^{-i\langle \omega, x\rangle}dx = -\frac{1}{i\omega_{1}}\sum_{\mathbf{F}\in\mathbf{Face}(P)}\nu_{1}(\mathbf{F})\int_{\mathbf{F}}e^{-i\langle \omega, x\rangle}d\mathbf{F}(x)\end{equation}
where $d\mathbf{F}$ is the standard infinitesimal volume measure on $\mathbf{F}$. Let $F_{P}$ denote the Fourier transform of the indicator function of $P$. Then, assuming that $\omega_{2} = ... = \omega_{d} = 0$ and using formula (\ref{eq:110}) we have
\begin{equation}\label{eq:111}\hskip-1.5cm F_{P}(\omega_{1}, 0..., 0) = \int_{P}e^{-ix_{1}\omega_{1}} dx = -\frac{1}{i\omega_{1}}\sum_{\mathbf{F}\in\mathbf{Face}(P)}\nu_{1}(\mathbf{F})\int_{\mathbf{F}}e^{-ix_{1}\omega_{1}}d\mathbf{F}(x).\end{equation}
Since, by assumption, non of the facets of $P$ are contained in a hyperplane of the form $x_{1} = c$ it follows that for each facet $\mathbf{F}$ there exists an integer $k = k_{\mathbf{F}}$, $2\leq k\leq d$, such that $\nu_{k_{\mathbf{F}}}(\mathbf{F})\neq 0$. Hence, $\mathbf{F}$ can be parameterized by $x_{1},...,x_{k_{\mathbf{F}} - 1}, x_{k_{\mathbf{F}} + 1},...,x_{d}$ and the infinitesimal volume measure $d\mathbf{F}$ of $\mathbf{F}$ is given in these coordinates by $d\mathbf{F}(x) = d\hat{x}_{k_{\mathbf{F}}} / |\nu_{k_{\mathbf{F}}}(\mathbf{F})|$. Thus, from equation (\ref{eq:111}) we have
\vskip-0.2cm
\begin{equation}\label{eq:112}\hskip-2.6cm F_{P}(\omega_{1}, 0..., 0) = -\frac{1}{i\omega_{1}}\sum_{\mathbf{F}\in\mathbf{Face}(P)}\frac{\nu_{1}(\mathbf{F})}{|\nu_{k_{\mathbf{F}}}(\mathbf{F})|}
\int_{\textbf{Proj}^{\perp}_{k_{\mathbf{F}}}(\mathbf{F})}e^{-ix_{1}\omega_{1}}d\hat{x}_{k_{\mathbf{F}}}.\end{equation}
Using the divergence formula again on the right hand side of equation (\ref{eq:112}) we have
\begin{equation}\label{eq:113}F_{P}(\omega_{1}, 0..., 0) = \left(- \frac{1}{i\omega_{1}}\right)^{2}\sum_{\mathbf{F}\in\mathbf{Face}(P)}
\sum_{\mathbf{F}'\in\mathbf{Face}(\mathbf{Proj}^{\perp}_{k_{\mathbf{F}}}(\mathbf{F}))}
\frac{\nu_{1}(\mathbf{F})\nu_{1}(\mathbf{F}')}{|\nu_{k_{\mathbf{F}}}(\mathbf{F})|}
\int_{\mathbf{F}'}e^{-ix_{1}\omega_{1}}d\mathbf{F}'(x).\end{equation}
Now, observe that by our main assumption it follows that none of the subfaces, of each face $\mathbf{F}$, are contained in a hyperplane of the form $x_{1} = c$ and thus the same is true for its projection on the subspace $x_{k_{\mathbf{F}}}^{\perp}$. That is, non of the subfaces $\mathbf{F}'$ of the projection of $\mathbf{F}$ to $x_{k_{\mathbf{F}}}^{\perp}$ are contained in a hyperplane of the form $\Pi:x_{1} = c$ where now we think of $\Pi$ as a hyperplane in the subspace $x_{k_{\mathbf{F}}}^{\perp}$. Hence, for each such subface $\mathbf{F}'$, which is also a facet of the projection of $\mathbf{F}$, there exists an integer $k_{\mathbf{F}'}, 2\leq k_{\mathbf{F}'}\leq d$, such that $\nu_{k_{\mathbf{F}'}}(\mathbf{F}')\neq0$ (where now we think of $\nu(\mathbf{F}')$ as the normal of the projection of $\mathbf{F}$ at $\mathbf{F}'$ in the subspace $x_{k_{\mathbf{F}}}^{\perp}$). Hence, exactly as was explained before we obtain from equation (\ref{eq:113}) that
\vskip-0.2cm
$$\hskip-1.5cm F_{P}(\omega_{1}, 0..., 0) = \left(- \frac{1}{i\omega_{1}}\right)^{2}\sum_{\mathbf{F}\in\mathbf{Face}(P)}
\sum_{\mathbf{F}'\in\mathbf{Face}(\mathbf{Proj}^{\perp}_{k_{\mathbf{F}}}(\mathbf{F}))}
\frac{\nu_{1}(\mathbf{F})\nu_{1}(\mathbf{F}')}{|\nu_{k_{\mathbf{F}}}(\mathbf{F})||\nu_{k_{\mathbf{F}'}}(\mathbf{F}')|}$$
$$\hskip5cm\times\int_{\mathbf{Proj}^{\perp}_{k_{\mathbf{F}}, k_{\mathbf{F}'}}(\mathbf{F}')}e^{-ix_{1}\omega_{1}}d\hat{x}_{k_{\mathbf{F}}, k_{\mathbf{F}'}}.$$
Continuing in this way we can obtain that
\vskip-0.2cm
$$\hskip-12.5cm F_{P}(\omega_{1}, 0..., 0)$$
$$\hskip-0.8cm = \left(- \frac{1}{i\omega_{1}}\right)^{d - 1}\sum_{\mathbf{F}\in\mathbf{Face}(P)}
\sum_{\mathbf{F}'\in\mathbf{Face}(\textrm{Proj}^{\perp}_{k_{\mathbf{F}}}(\mathbf{F}))}......
\sum_{\mathbf{F}^{(d - 2)}\in\mathbf{Face}(\mathbf{Proj}^{\perp}_{k_{\mathbf{F}}, k_{\mathbf{F}'},...,k_{\mathbf{F}^{(d - 3)}}}(\mathbf{F}^{(d - 3)}))}$$
\begin{equation}\label{eq:114}\hskip0.2cm\frac{\nu_{1}(\mathbf{F})\nu_{1}(\mathbf{F}')...\nu_{1}(\mathbf{F}^{(d - 2)})}{|\nu_{k_{\mathbf{F}}}(\mathbf{F})||\nu_{k_{\mathbf{F}'}}(\mathbf{F}')|...|\nu_{k_{\mathbf{F}^{(d - 2)}}}(\mathbf{F}^{(d - 2)})|}
\int_{\mathbf{Proj}^{\perp}_{k_{\mathbf{F}}, k_{\mathbf{F}'},...,k_{\mathbf{F}^{(d - 2)}}}(\mathbf{F}^{(d - 2)})}e^{-ix_{1}\omega_{1}}d\hat{x}_{k_{\mathbf{F}}, k_{\mathbf{F}'},...,k_{\mathbf{F}^{(d - 2)}}}.\end{equation}
Observe that for each chain of subfaces $\mathbf{F}^{(d - 2)}\subset \mathbf{F}^{(d - 3)}\subset...\subset \mathbf{F}'\subset\mathbf{F}$ we have that all the indices $k_{\mathbf{F}},...,k_{\mathbf{F}^{(d - 2)}}$ are different (since each time we choose a different projection). Hence, it follows that $d\hat{x}_{k_{\mathbf{F}}, k_{\mathbf{F}'},...,k_{\mathbf{F}^{(d - 2)}}} = dx_{1}$ and that the function
\vskip-0.2cm
$$\hskip-7.3cm x\mapsto\mathbf{Proj}^{\perp}_{k_{\mathbf{F}}, k_{\mathbf{F}'},...,k_{\mathbf{F}^{(d - 2)}}}(x), x\in\Bbb R^{d}$$
coincides with the orthogonal projection on the $x_{1}$ axis. Since a projection of a connected set is connected it follows that the integration in the right hand side of equation (\ref{eq:114}) must be on a segment on the $x_{1}$ axis with, say, left endpoint $x_{\mathbf{F}^{(d - 2)}}$ and right endpoint $y_{\mathbf{F}^{(d - 2)}}$ which depend on each face $\mathbf{F}^{(d - 2)}$. Also, these end points must be different since otherwise it would have followed that $\mathbf{F}^{(d - 2)}$ is contained in a hyperplane of the form $x_{1} = c$. However, this is impossible since by our construction non of the faces which appear in the index set of the sums in the right hand side of equation (\ref{eq:114}) are contained on such a plane. Hence, we obtain that each integral in the right hand side of equation (\ref{eq:114}) is given by
\vskip-0.2cm
$$\hskip-1.25cm\int_{\mathbf{Proj}^{\perp}_{k_{\mathbf{F}}, k_{\mathbf{F}'},...,k_{\mathbf{F}^{(d - 2)}}}(\mathbf{F}^{d - 2})}e^{-ix_{1}\omega_{1}}d\hat{x}_{k_{\mathbf{F}}, k_{\mathbf{F}'},...,k_{\mathbf{F}^{(d - 2)}}} = \int_{\mathbf{Proj}_{1}(\mathbf{F}^{(d - 2)})}e^{-x_{1}\omega_{1}}dx_{1}$$
$$\hskip-5.15cm = \int_{x_{\mathbf{F}^{(d - 2)}}}^{y_{\mathbf{F}^{(d - 2)}}}e^{-ix_{1}\omega_{1}}dx_{1} = \frac{e^{-i\omega_{1}y_{\mathbf{F}^{(d - 2)}}} - e^{-i\omega_{1}x_{\mathbf{F}^{(d - 2)}}}}{-i\omega_{1}}.$$
Observe that the end points $x_{\mathbf{F}^{(d - 2)}}$ and $y_{\mathbf{F}^{(d - 2)}}$ must be integers from our assumption that the first component of each vertex of $P$ is an integer and the fact that the set of vertices of an orthogonal projection of a polytope is contained in the set of projections of the vertices of this polytope. Hence, it follows that $F_{P}(2\pi k, 0,..., 0) = 0$ for every $k\in\Bbb Z\setminus\{0\}$ which implies that $\mathcal{Z}(F_{P})\cup\{\bar{0}\}$ contains a lattice of rank one. Thus, $P$ has an infinite set of orthogonal exponentials.$\square$\\

%///////////////////////////////////////////////////////////////////////////////////////////////////////////////////////////////////////////////////////
%///////////////////////////////////////////////////////////////////////////////////////////////////////////////////////////////////////////////////////
%///////////////////////////////////////////////////////////////////////////////////////////////////////////////////////////////////////////////////////
%///////////////////////////////////////////////////////////////////////////////////////////////////////////////////////////////////////////////////////
%///////////////////////////////////////////////////////////////////////////////////////////////////////////////////////////////////////////////////////

\textbf{Proof of Theorem 2.4}: First, observe that for a given lattice $\Lambda$ the fact that the simple, convex polytope $P$ has the infinite set $E(\Lambda)$ of orthogonal exponentials is equivalent to the fact that the Fourier transform $F_{P}$, of the indicator function of $P$, vanishes on $\Lambda\setminus\{\bar{0}\}$. Now, for a given point $\omega_{0}$ in $\Lambda\setminus\{\bar{0}\}$ let us distinguish between the following two cases.

If $\omega_{0}$ is orthogonal to one of the edges of $P$, or equivalently the denominator in one of the functions $D_{v}, v\in\mathrm{Vert}(P)$ vanishes, then there exist two different vertices $v, v'$ of $P$ such that $\langle v - v', \omega_{0}\rangle = 0$ and hence Theorem 2.4 follows for $m = 0$.

Hence, let us assume that $\omega_{0}$ is not orthogonal to any of the edges of $P$. Observe that if $\omega_{0}\in\Lambda\setminus\{0\}$ then $r\omega_{0}\in\Lambda\setminus\{0\}$ for every $r\in\Bbb Z\setminus\{0\}$. Hence, since $F_{P}$ vanishes on $\Lambda\setminus\{0\}$ it follows that it vanishes on $r\omega_{0}, r\in\Bbb Z\setminus\{0\}$. Also, the function $D_{v}$ is homogenous of degree $- d$. Thus, from equation (\ref{eq:92}) it follows that
\vskip-0.2cm
\begin{equation}\label{eq:120}\hskip-7cm\sum_{v\in\mathrm{Vert}(P)}D_{v}(\omega_{0})e^{-ir\langle v, \omega_{0}\rangle} = 0, r\in\Bbb Z\setminus\{0\}.\end{equation}
Observe that by using identity (\ref{eq:91}) for $j = 0$ it follows that equation (\ref{eq:120}) is also true for $r = 0$. Hence, if $v_{1},...,v_{n}$ are the vertices of $P$ then using equation (\ref{eq:120}) for $r = 0,...,n - 1$ we obtain the following Vandermonde's type system of equations
\begin{equation}\label{eq:121}\left(\begin{array}{cccc}
1 & 1 & ... & 1\\
e^{-i\langle v_{1}, \omega_{0}\rangle} & e^{-i\langle v_{2}, \omega_{0}\rangle} & ... & e^{-i\langle v_{n}, \omega_{0}\rangle}\\
& ...............\\
e^{-i(n - 1)\langle v_{1}, \omega_{0}\rangle} & e^{-i(n - 1)\langle v_{2}, \omega_{0}\rangle} & ... & e^{-i(n - 1)\langle v_{n}, \omega_{0}\rangle}\end{array}\right)
\left(\begin{array}{c}D_{v_{1}}(\omega_{0})\\
D_{v_{2}}(\omega_{0})\\
...\\
D_{v_{n}}(\omega_{0})
\end{array}\right)
 = \left(\begin{array}{c}0\\0\\...\\0\end{array}\right).\end{equation}
Observe that non of the terms $D_{v_{i}}(\omega_{0}), 1\leq i\leq n$ can vanish. Indeed, otherwise we would have that $\det(u_{1} - v_{i},...,u_{d} - v_{i}) = 0$ where $v$ is adjacent to the vertices $u_{1},...,u_{d}$ which would imply that $v$ is contained in the hyperplane spanned by $u_{1},...,u_{d}$ and hence it cannot be a vertex of $P$. Hence the determinant of the matrix in the left hand side of equation (\ref{eq:121}) must vanish, i.e., there exist integers $k, k'$ which satisfy $1\leq k, k'\leq n, k\neq k'$ such that $\exp(-i\langle \omega_{0}, v_{k}\rangle) = \exp(-i\langle \omega_{0}, v_{k'}\rangle)$. This is equivalent to the condition that $\langle \omega_{0}, v_{k} - v_{k'}\rangle = 2\pi m$ for some integer $m$. This proves Theorem 2.4.$\square$\\

%///////////////////////////////////////////////////////////////////////////////////////////////////////////////////////////////////////////////////////
%///////////////////////////////////////////////////////////////////////////////////////////////////////////////////////////////////////////////////////
%///////////////////////////////////////////////////////////////////////////////////////////////////////////////////////////////////////////////////////
%///////////////////////////////////////////////////////////////////////////////////////////////////////////////////////////////////////////////////////
%///////////////////////////////////////////////////////////////////////////////////////////////////////////////////////////////////////////////////////

\textbf{Proof of Theorem 2.5}: During the proof we will use the notation $\mathbf{Proj}_{\Bbb R^{m}}$ to denote the orthogonal projection from $\Bbb R^{d}$ to $\Bbb R^{m}$.

Observe that $S = \{e^{i\pi\langle\cdot, \bar{k}\rangle}:k\in\Bbb Z^{d}\}$ is a set of orthogonal exponentials for the unit cube $C_{d}$. Hence, for every non degenerate matrix $M\in\Bbb R^{d\times d}$ and a point $\bar{k}\in\Bbb Z^{d}\setminus\{\bar{0}\}$ we have
\vskip-0.2cm
\begin{equation}\label{eq:130} 0 = \int_{C_{d}}e^{i\pi\langle x, \bar{k}\rangle}dx = \frac{1}{|\det M|}\int_{MC_{d}}e^{i\pi\langle M^{-1}x, \bar{k}\rangle}dx = \frac{1}{|\det M|}\int_{MC_{d}}e^{i\pi\langle x, (M^{-1})^{T}\bar{k}\rangle}dx.\end{equation}
Let $m$ be an integer satisfying $1\leq m\leq d - 1$ and let us define the following subset $\Gamma = \Gamma(M, m)$ of $\Bbb Z^{d}\setminus\{\bar{0}\}$:
\vskip-0.2cm
$$\hskip-2cm\Gamma(M, m) = \{\bar{k}\in\Bbb Z^{d}\setminus\{\bar{0}\}:\langle (M^{-1})^{T}\bar{k}, e_{i}\rangle = 0, i = m + 1, m + 2,...,d\}.$$
Hence, from equation (\ref{eq:130}) we obtain that for every $\bar{k}$ in $\Gamma(M, m)$ we have
\vskip-0.2cm
$$\hskip-5.5cm 0 = \int_{MC_{d}}e^{i\pi\left[\langle (M^{-1})^{T}\bar{k}, e_{1}\rangle x_{1} + ... + \langle (M^{-1})^{T}\bar{k}, e_{m}\rangle x_{m}\right]}dx$$
$$\hskip-4.25cm = \int_{\mathbf{Proj}_{\Bbb R^{m}}(MC_{d})}e^{i\pi\left[\langle (M^{-1})^{T}\bar{k}, e_{1}\rangle x_{1} + ... + \langle (M^{-1})^{T}\bar{k}, e_{m}\rangle x_{m}\right]}$$
$$\hskip-4.5cm\times\left[\int_{MC_{d}\cap \mathbf{H}_{x_{1},...,x_{m}}}dx_{m + 1}dx_{m + 2}...dx_{d}\right]dx_{1}...dx_{m}.$$
Hence, if we define the weight function
\vskip-0.2cm
$$\hskip-4cm W_{M,m}(x_{1},...,x_{m}) = \int_{MC_{d}\cap \mathbf{H}_{x_{1},...,x_{m}}}dx_{m + 1}dx_{m + 2}...dx_{d}$$
then
\vskip-0.2cm
\begin{equation}\label{eq:131}\hskip-4.25cm\int_{\mathbf{Proj}_{\Bbb R^{m}}(MC_{d})}e^{i\pi\left[\langle (M^{-1})^{T}\bar{k}, e_{1}\rangle x_{1} + ... + \langle (M^{-1})^{T}\bar{k}, e_{m}\rangle x_{m}\right]}\end{equation}
$$\hskip1cm\times W_{M,m}(x_{1},...,x_{m})dx_{1}...dx_{m} = 0, \bar{k}\in\Gamma(M, m).$$
Now, let us define the set $\Lambda$ as follows
$$\hskip-4.4cm\Lambda = \{\bar{0}\}\cup\left\{\pi\left(\langle \bar{k}, M^{-1}e_{1}\rangle,...,\langle \bar{k}, M^{-1}e_{m}\rangle\right):\bar{k}\in\Gamma(M, m)\right\}$$
$$\hskip0.1cm = \left\{\pi\left(\langle \bar{k}, M^{-1}e_{1}\rangle,...,\langle \bar{k}, M^{-1}e_{m}\rangle\right):\bar{k}\in\Bbb Z^{d}, \langle \bar{k}, M^{-1}e_{i}\rangle = 0, i = m + 1, m + 2,...,d\right\}.$$
Then, from equation (\ref{eq:131}) it follows that
\vskip-0.2cm
$$\hskip-4cm\int_{\mathbf{Proj}_{\Bbb R^{m}}(MC_{d})}e^{i\langle \lambda, x^{\ast}\rangle}W_{M, m}(x^{\ast})dx^{\ast} = 0, x^{\ast} = (x_{1},...,x_{m})$$
for every $\lambda\in\Lambda\setminus\{\bar{0}\}$. Also, it can be easily observed that $\Lambda = \Lambda - \Lambda$. Thus, it follows that $E(\Lambda)$ is a set of orthogonal exponentials for the orthogonal projection of $MC_{d}$ on $\Bbb R^{m}$ with respect to the positive weight function $W_{M,m}$.
It is only left to show that the density of $\Lambda$ is greater than or equal to $\pi^{-d}|\det M| / |\det U|$. By our assumption we can find $m$ linearly independent vectors $\bar{k}_{1},...,\bar{k}_{m}$ in $\Bbb Z^{d}\setminus\{\bar{0}\}$ such that
\vskip-0.2cm
\begin{equation}\label{eq:132}\hskip-4.5cm\langle \bar{k}_{j}, M^{-1}e_{i}\rangle = 0, i = m + 1, m + 2,...,d, j = 1,...,m.\end{equation}
Observe also that each linear combination of $\bar{k}_{1},...,\bar{k}_{m}$ in $\Bbb Z$ will produce a vector in $\Bbb Z^{d}$ which satisfy the last system. Hence, we have that $\pi\Lambda'\subset\Lambda$ where
\vskip-0.2cm
$$\hskip-6.4cm\Lambda' = \left\{\alpha_{1}\left(\langle \bar{k}_{1}, M^{-1}e_{1}\rangle,...,\langle \bar{k}_{1}, M^{-1}e_{m}\rangle\right)\right.$$
$$\left. + ... + \alpha_{m}\left(\langle \bar{k}_{m}, M^{-1}e_{1}\rangle,...,\langle \bar{k}_{m}, M^{-1}e_{m}\rangle\right):\alpha_{1},...,\alpha_{m}\in\Bbb Z\right\}.$$
Now, the vectors
\vskip-0.2cm
$$\hskip-5cm\sigma_{i} = \left(\langle \bar{k}_{i}, M^{-1}e_{1}\rangle,...,\langle \bar{k}_{i}, M^{-1}e_{m}\rangle\right), i = 1,...,m$$
are linearly independent. Indeed, otherwise there exist $m$ real numbers $\beta_{1},...,\beta_{m}$, not all zero, such that
\vskip-0.2cm
$$\hskip-9.45cm\bar{0} = \beta_{1}\sigma_{1} + ... + \beta_{m}\sigma_{m}$$
$$\hskip0.5cm = \beta_{1}\left(\langle \bar{k}_{1}, M^{-1}e_{1}\rangle,...,\langle \bar{k}_{1}, M^{-1}e_{m}\rangle\right) + ... + \beta_{m}\left(\langle \bar{k}_{m}, M^{-1}e_{1}\rangle,...,\langle \bar{k}_{m}, M^{-1}e_{m}\rangle\right)$$
$$\hskip-1.8cm = \left(\langle \beta_{1}\bar{k}_{1} + ... + \beta_{m}\bar{k}_{m}, M^{-1}e_{1}\rangle,...,\langle \beta_{1}\bar{k}_{1} + ... + \beta_{m}\bar{k}_{m}, M^{-1}e_{m}\rangle\right).$$
Since the vectors $\bar{k}_{i}, i = 1,...,m$ satisfy the system (\ref{eq:132}) it follows that the vector $\bar{k}_{0} = \beta_{1}\bar{k}_{1} + ... + \beta_{m}\bar{k}_{m}$ will satisfy the system $\langle \bar{k}_{0}, M^{-1}e_{i}\rangle, i = 1,...,d$. But, since $M$ is nondegenerate it follows that $\bar{k}_{0} = 0$ which implies that $\bar{k}_{1},...,\bar{k}_{m}$ are linearly dependent which is a contradiction. Hence, $\Lambda'$ is a full rank lattice and thus has a positive density which is given by $D(\Lambda') = 1 / |\det A|$ where
\vskip0.2cm
$$\hskip-4cm A = \left(\begin{array}{cccc}
\langle \bar{k}_{1}, M^{-1}e_{1}\rangle & \langle \bar{k}_{1}, M^{-1}e_{2}\rangle & ... & \langle \bar{k}_{1}, M^{-1}e_{m}\rangle\\
\langle \bar{k}_{2}, M^{-1}e_{1}\rangle & \langle \bar{k}_{2}, M^{-1}e_{2}\rangle & ... & \langle \bar{k}_{2}, M^{-1}e_{m}\rangle\\
& ..........\\
\langle \bar{k}_{m}, M^{-1}e_{1}\rangle & \langle \bar{k}_{m}, M^{-1}e_{2}\rangle & ... & \langle \bar{k}_{m}, M^{-1}e_{m}\rangle\\
\end{array}\right).$$
In order to calculate the determinant of $A$ observe that
$$\hskip-4.5cm\left(\begin{array}{cc}
A & \bar{0}_{m, d - m}\\
\bar{0}_{d - m, m} & I_{d - m, d - m}
\end{array}\right)
 = \left(\begin{array}{c}
K\\
e_{m + 1}M\\
...\\
e_{d}M
\end{array}\right)M^{-1} = UM^{-1}$$
and thus $\det A = \det U / \det M$. Hence, since $\pi\Lambda'\subseteq\Lambda$ the theorem follows. $\square$\\

In general, an explicit computation of the weight function $W_{M,m}$ is nontrivial. However, in case where $M$ is orthogonal and $m = 1$ or $m = d - 1$ then $W_{M,m}$ can be computed as follows. For $m = 1$, $W_{M,m}$ can be rewritten as
\vskip-0.2cm
$$\hskip-3cm W_{M,m}(x_{1}) = |MC_{d}\cap \mathbf{H}_{x_{1}}| = |C_{d}\cap M^{T}\mathbf{H}_{x_{1}}| = \int_{C_{d}\cap M^{T}\mathbf{H}_{x_{1}}}dS_{x}$$
where $dS_{x}$ is the standard infinitesimal volume measure on the hyperplane $\mathbf{H}_{M}^{x_{1}}: = M^{T}\mathbf{H}_{x_{1}}$. For $\mathbf{H}_{M}^{x_{1}}$ we have the following
\vskip-0.2cm
$$\hskip-0.8cm \mathbf{H}_{M}^{x_{1}} = \left\{u\in\Bbb R^{d}:\left\langle u, \frac{M^{T}e_{1}}{|M^{T}e_{1}|}\right\rangle = \frac{x_{1}}{|M^{T}e_{1}|}\right\} = \left\{u\in\Bbb R^{d}:\left\langle u, M^{T}e_{1}\right\rangle = x_{1}\right\}$$
algebraic presentation where in the last passage we used the fact that $M$ is orthogonal and hence $|M^{T}e_{1}| = 1$. Thus, we can write
\vskip-0.2cm
$$\hskip-8.5cm W_{M,m}(x_{1}) = (\mathcal{R}f)\left(M^{T}e_{1}, x_{1}\right)$$
where $f$ is the characteristic function of the unit cube $C_{d}$ and $(\mathcal{R}f)(\omega, t) = \int_{\langle\omega, x\rangle = t}f(x)dS_{x}$ is the Radon transform of $f$ ($(\omega, t)\in\Bbb S^{d - 1}\times\Bbb R$). Hence, using the Fourier-slice theorem
\vskip-0.2cm
$$\hskip-7.7cm\mathcal{F}[f](u) = \mathcal{F}_{t}\left[(\mathcal{R}f)\left(u / |u|, \cdot\right)\right](|u|),$$
where $\mathcal{F}[f]$ is the Fourier transform of $f$ and $\mathcal{F}_{t}$ is the Fourier transform, with respect to the last variable $t$, of $\mathcal{R}f$, and using the well known formula for the Fourier transform of the unit cube we obtain that
\vskip-0.2cm
$$\hskip-4.5cm W(x_{1}) = \int_{-\infty}^{\infty}\left(\prod_{k = 1}^{d}\frac{\sin\left(2\pi\lambda\langle M^{T}e_{1}, e_{k}\rangle\right)}{\pi\lambda\langle M^{T}e_{1}, e_{k}\rangle}\right)e^{2\pi i\lambda x_{1}}d\lambda.$$
In case where $m = d - 1$ then again by using the assumption that $M$ is orthogonal we have
\vskip-0.2cm
$$\hskip-7cm W_{M, m}(x_{1},...,x_{d - 1}) = \int_{C_{d}\cap M^{T}\mathbf{H}_{x_{1},...,x_{d - 1}}}dl$$
where $dl$ is the infinitesimal length measure of the line
\vskip-0.2cm
$$\hskip-0.65cm l = M^{T}\mathbf{H}_{x_{1},...,x_{d - 1}} = \{u + vt:t\in\Bbb R\}, \hskip0.1cm\textit{where}\hskip0.2cm u = M^{T}\left(\begin{array}{c}x_{1}\\...\\x_{d - 1}\\0\end{array}\right), v = M^{T}e_{d}.$$
For the computation of $W_{M,m}$ first observe that we can assume that non of the components of the direction vector $v$ are equal to zero since otherwise let us assume, without loss of generality, that $v_{m}, v_{m + 1},...,v_{d} = 0, 2\leq m\leq d$. Then, our problem is reduced to finding the volume of the intersection of the line $l$ with the unit cube $C_{m - 1}$ where we think of both $l$ and $C_{m - 1}$ as subsets of the plane $\mathbf{H} = \Bbb R^{m - 1}\times\{(u_{m}, u_{m + 1},...,u_{d})\}$ and where now non of the components of the direction vector of $l$ in $\mathbf{H}$ are equal to zero.

Assuming that the intersection of the line $l$ with the interior of $C_{d}$ is nonempty it follows that there exists a real number $s$ such that $u + sv$ is in the interior of $C_{d}$. A number $s$ which satisfies the last condition can be found by solving the inequalities $u_{i} + sv_{i} > - 1,u_{i} + sv_{i} < 1, i = 1,...,d$. Hence, since the line $l$ can be also parameterized by $u + sv + tv, t\in\Bbb R$ we can assume, without loss of generality, that $u$ is in the interior of $C_{d}$. Also, since $M$ is orthogonal it follows that $|v| = 1$.

Since non of the components of the direction vector $v$ of $l$ are equal to zero it follows that $l$ intersects the boundary of $C_{d}$ at exactly two points $P_{1}$ and $P_{2}$ which are obtained as the images $l(t')$ and $l(t'')$ of some two values $t'$ and $t''$ of $t$. Hence, the length of the segment $C_{d}\cap l$ is equal to
$$\hskip-0.5cm |P_{1} - P_{2}| = |l(t') - l(t'')| = |(u + t'v) - (u + t'')v| = |(t' - t'')v| = |t' - t''|.$$
Hence, all is left is to find the corresponding values $t'$ and $t''$ of $t$. First, observe that $t'$ and $t''$ must have opposite signs since $u$ is strictly inside $C_{d}$ and thus we can assume, without loss of generality, that $t'$ is positive and $t''$ is negative. Observe also that if $l$ intersects the boundary of $C_{d}$ then it must intersect at least two of the following hyperplanes $x_{i} = \pm1, 1\leq i\leq d$. The intersections of $l$ with these hyperplanes occur respectively when $t$ receives the values
\vskip-0.2cm
$$\hskip-6cm t = (1 - u_{i}) / v_{i}, t = - (1 + u_{i}) / v_{i}, 1\leq i\leq d.$$
Hence, the value of $t'$ will be the smallest value of the positive terms from these values of $t$. Indeed, for values of $t$ in $(0, t')$ the segment $l(t), 0 \leq t < t'$ does not intersect any of the hyperplanes $x_{i} = \pm1,1\leq i\leq d$ and hence it is inside $C_{d}$. Thus, at $t = t'$ it must be still inside $C_{d}$ but also on some of the above hyperplanes and thus on the boundary of $C_{d}$. In the same way we can prove that $t''$ is the greatest value of the negative terms of these values. Explicitly, $t'$ and $t''$ are given by
$$\hskip0.5cm t' = 1 / \max\left\{\frac{v_{i}}{1 - u_{i}}, - \frac{v_{i}}{1 + u_{i}}, 1\leq i\leq d\right\}, t'' = 1 / \min\left\{\frac{v_{i}}{1 - u_{i}}, - \frac{v_{i}}{1 + u_{i}}, 1\leq i\leq d\right\}.$$

\begin{example}

Let us choose $d = 3, m = 2$ and the following orthogonal matrix
$$\hskip-3cm M = \frac{1}{\sqrt{q^{2} + 2p^{2}}}\left(\begin{array}{ccc}
q / \sqrt{2} & q / \sqrt{2} & \sqrt{2}p\\
- \sqrt{q^{2} + 2p^{2}} /\sqrt{2} & \sqrt{q^{2} + 2p^{2}} /\sqrt{2} & 0\\
- p & - p & q
\end{array}\right)$$
where $p$ and $q$ are co-prime positive integers. Since the first column of $M^{-1} = M^{T}$ is proportional to an integer vector it follows from Lemma 4.2 that there exist two linearly independent integer vectors $\bar{k}_{1}, \bar{k}_{2}$ satisfying $\langle \bar{k}_{i}, M^{-1}e_{3}\rangle = 0, i = 1, 2$. Hence, the set $E(\Lambda)$ of orthogonal exponentials (with respect to a weight function $W$ to be computed) for the orthogonal projection of $MC_{3}$ on $\Bbb R^{2}$, which was constructed in Theorem 2.5, will have a positive density. In this particular example we can calculate the set $\Lambda$ explicitly
\vskip-0.2cm
$$\hskip-3.25cm\Lambda = \left\{\pi\left(\langle \bar{k}, M^{-1}e_{1}\rangle, \langle \bar{k}, M^{-1}e_{2}\rangle\right):\bar{k}\in\Bbb Z^{3}, \langle \bar{k}, M^{-1}e_{3}\rangle = 0\right\}$$
$$\hskip0.45cm = \left\{\pi\left(\frac{(k_{1} + k_{2})q + 2k_{3}p}{\sqrt{2}\sqrt{q^{2} + 2p^{2}}}, - \frac{1}{\sqrt{2}}(k_{1} - k_{2})\right):\bar{k}\in\Bbb Z^{3}, (k_{1} + k_{2})p - k_{3}q = 0\right\}$$
$$\hskip-3.1cm = \left\{\pi\left(\frac{\sqrt{q^{2} + 2p^{2}}m}{\sqrt{2}}, -\frac{1}{\sqrt{2}}(2n - mq)\right):(n, m)\in\Bbb Z^{2}\right\}$$
and obtain that, since $\Lambda$ is a lattice, its density is given by $D(\Lambda) = \pi^{-2} / \sqrt{q^{2} + 2p^{2}}$. The orthogonal projection of $MC_{3}$ on $\Bbb R^{2}$ can be easily verified to be equal to $P'_{p, q} = \sqrt{2}P_{p, q}$ where $P_{p, q}$ is the following hexagon
\vskip-0.4cm
$$P_{p, q} = \mathrm{Conv}\left\{\pm\left(\frac{p + q}{\sqrt{q^{2} + 2p^{2}}}, 0\right), \pm\left(\frac{p}{\sqrt{q^{2} + 2p^{2}}}, 1\right), \pm\left(\frac{p}{\sqrt{q^{2} + 2p^{2}}}, - 1\right)\right\}.$$
For the computation of the weight function $W$ we need to compute the length of the intersection of the line
\begin{equation}\label{eq:140} M^{T}\mathbf{H}_{x_{1}, x_{2}} = \underset{u}{\underbrace{\frac{1}{\sqrt{2}\sqrt{q^{2} + 2p^{2}}}\left(\begin{array}{c}qx_{1} - \sqrt{q^{2} + 2p^{2}}x_{2}\\qx_{1} + \sqrt{q^{2} + 2p^{2}}x_{2}\\2px_{1}\end{array}\right)}} + \underset{v}{\underbrace{\frac{1}{\sqrt{q^{2} + 2p^{2}}}\left(\begin{array}{c}-p\\-p\\q\end{array}\right)}}t, t\in\Bbb R.\end{equation}
with $C_{3}$. We need to find a real number $s = s(x_{1}, x_{2})$ such that $|u_{i} + sv_{i}| < 1, i = 1,2,3$. Then, as was explained above, $W$ is given by
\vskip-0.2cm
$$\hskip-8cm W(x_{1}, x_{2}) = |t'(x_{1}, x_{2}) - t''(x_{1}, x_{2})|$$
where
\vskip-0.2cm
$$t'(x_{1}, x_{2}) = \frac{1}{\max\left\{\frac{v_{1}}{1 - u_{1} - sv_{1}}, \frac{v_{2}}{1 - u_{2} - sv_{2}}, \frac{v_{3}}{1 - u_{3} - sv_{3}}, - \frac{v_{1}}{1 + u_{1} + sv_{1}}, - \frac{v_{2}}{1 + u_{2} + sv_{2}}, - \frac{v_{3}}{1 + u_{3} + sv_{3}}\right\}},$$
$$t''(x_{1}, x_{2}) = \frac{1}{\min\left\{\frac{v_{1}}{1 - u_{1} - sv_{1}}, \frac{v_{2}}{1 - u_{2} - sv_{2}}, \frac{v_{3}}{1 - u_{3} - sv_{3}}, - \frac{v_{1}}{1 + u_{1} + sv_{1}}, - \frac{v_{2}}{1 + u_{2} + sv_{2}}, - \frac{v_{3}}{1 + u_{3} + sv_{3}}\right\}},$$
and where
$$\hskip-1.15cm u_{1} = \frac{qx_{1} - \sqrt{q^{2} + 2p^{2}}x_{2}}{\sqrt{2}\sqrt{q^{2} + 2p^{2}}},
u_{2} = \frac{qx_{1} + \sqrt{q^{2} + 2p^{2}}x_{2}}{\sqrt{2}\sqrt{q^{2} + 2p^{2}}},
u_{3} = \frac{2px_{1}}{\sqrt{2}\sqrt{q^{2} + 2p^{2}}},$$
$$\hskip-6.7cm v_{1} = v_{2} = - \frac{p}{\sqrt{q^{2} + 2p^{2}}}, v_{3} = \frac{q}{\sqrt{q^{2} + 2p^{2}}}.$$
The computation of $s(x_{1}, x_{2})$ is straightforward but rather technical. Without going into details one can choose $s(x_{1}, x_{2})$ as follows: if we denote for abbreviation $\Delta = \sqrt{q^{2} + 2p^{2}}$ then

\[\hskip-1cm s(x_{1}, x_{2}) =
\begin{cases}
\frac{qx_{1}}{\sqrt{2}p}, \hskip0.1cm\textit{if}\hskip0.1cm\pm\frac{(2p^{2} - q^{2})x_{1}}{\sqrt{2}\triangle} + \frac{q|x_{2}|}{\sqrt{2}} \geq q - p,\\
\frac{\sqrt{2}px_{1}}{q}, \hskip0.1cm\textit{if}\hskip0.2cm \pm\frac{(2p^{2} - q^{2})x_{1}}{\sqrt{2}\triangle} - \frac{q|x_{2}|}{\sqrt{2}} \geq p - q,\\
\frac{\triangle}{2p} - \frac{\triangle}{2q} - \frac{\triangle |x_{2}|}{2\sqrt{2}p} + \frac{\triangle^{2}x_{1}}{2\sqrt{2}pq},  \hskip0.1cm\textit{if}\hskip0.2cm\frac{(2p^{2} - q^{2})x_{1}}{\sqrt{2}\triangle} \pm\frac{q|x_{2}|}{\sqrt{2}} \geq \pm(q - p),\\
-\frac{\triangle}{2p} + \frac{\triangle}{2q} + \frac{\triangle |x_{2}|}{2\sqrt{2}p} + \frac{\triangle^{2}x_{1}}{2\sqrt{2}pq},  \hskip0.1cm\textit{if}\hskip0.2cm \frac{(q^{2} - 2p^{2})x_{1}}{\sqrt{2}\triangle} \pm \frac{q|x_{2}|}{\sqrt{2}} \geq \pm(q - p).\\
\end{cases}
\]\\

\end{example}

\begin{example}

For a more general example in higher dimensions let $d, m, 1\leq m\leq d - 1$, be two positive integers, let $x_{1},...,x_{d}$ be $d$ distinct real numbers such that $x_{m + 1},x_{m + 2},...,x_{d}$ are rational and let $M$ be the $d\times d$ matrix such that $(M^{-1})^{T}$ is the following Vandermonde's type matrix
\vskip-0.2cm
$$\hskip-8cm(M^{-1})^{T} = \left(\begin{array}{cccc}
1 & x_{1} & ... & x_{1}^{d - 1}\\
& ......\\
1 & x_{d} & ... & x_{d}^{d - 1}\\
\end{array}\right).$$
In order to use Theorem 2.5 for the matrix $M$ we need to find $m$ linearly independent integer vectors $\bar{k}_{i}, i = 1,...,m$ such that $\langle (M^{-1})^{T}\bar{k}_{i}, e_{j}\rangle = 0, 1\leq i\leq m, m + 1\leq j\leq d$, or more explicitly the following matrix equation
\begin{equation}\label{eq:141}\hskip-5.85cm\left(\begin{array}{cccc}
1 & x_{1} & ... & x_{1}^{d - 1}\\
& ......\\
1 & x_{m} & ... & x_{m}^{d - 1}\\
1 & x_{m + 1} & ... & x_{m + 1}^{d - 1}\\
& ......\\
1 & x_{d} & ... & x_{d}^{d - 1}\\
\end{array}\right)
\underset{\bar{k}_{i}}{\underbrace{\left(\begin{array}{c}
k_{i,1}\\
...\\
k_{i,m}\\
k_{i,m + 1}\\
...\\
k_{i,d}
\end{array}\right)}}
 = \left(\begin{array}{c}
*\\
...\\
*\\
0\\
...\\
0
\end{array}\right)\end{equation}
should be satisfied for $i = 1,...,m$. Let
\vskip-0.2cm
$$\hskip-1cm p(x) = (x - x_{m + 1})(x - x_{m + 2})...(x - x_{d}) = x^{d - m} + c_{d - m - 1}x^{d - m - 1} + ... + c_{0}$$
be the unique monic polynomial with the roots $x_{m + 1}, x_{m + 2},...,x_{d}$. Observe that since $x_{m + 1},x_{m + 2},...,x_{d}$ are rational then all the coefficients $c_{0},...,c_{d - m - 1}$ of $p$ are also rational. From the definition of the polynomial $p$ it is clear that if we choose
\vskip-0.2cm
$$\hskip-6cm\bar{k}_{i} = \left(\underset{i - 1\hskip0.1cm\textit{times}}{\underbrace{0,...,0}},c_{0},...,c_{d - m - 1},1,\underset{m - i\hskip0.1cm\textit{times}}{\underbrace{0,...,0}}\right)^{T}$$
then the system (\ref{eq:141}) will be satisfied. It can be easily seen that with this choice the vectors $\bar{k}_{i}, 1\leq i\leq m$ are linearly independent and while they will be rational we can just multiply each of them by the common denominator of all their rational components in order to convert them to integer vectors. Finally, we can use $\bar{k}_{i}, 1\leq i\leq m$ to build an infinite set of orthogonal exponentials with a positive density since, as was shown in the proof of Theorem 2.5, the set
$$\hskip-6.4cm\pi\Lambda' = \left\{\alpha_{1}\pi\left(\langle \bar{k}_{1}, M^{-1}e_{1}\rangle,...,\langle \bar{k}_{1}, M^{-1}e_{m}\rangle\right)\right.$$
$$\left. + ... + \alpha_{m}\pi\left(\langle \bar{k}_{m}, M^{-1}e_{1}\rangle,...,\langle \bar{k}_{m}, M^{-1}e_{m}\rangle\right):\alpha_{1},...,\alpha_{m}\in\Bbb Z\right\}$$
will have a positive density and $E(\pi\Lambda')$ will be a set of orthogonal exponentials for the orthogonal projection of $MC_{d}$ on $\Bbb R^{m}$.\\

\hskip-0.625cm For example, let us choose $d = 5, m = 3$, $x_{1} = 0, x_{2} = 1, x_{3} = - 1, x_{4} = 2, x_{5} = - 2$ and the matrix $M$ such that
\vskip-0.2cm
$$\hskip-8cm(M^{-1})^{T} = \left(\begin{array}{ccccc}
1 & 0 & 0 & 0 & 0\\
1 & 1 & 1 & 1 & 1\\
1 & - 1 & 1 & - 1 & 1\\
1 & 2 & 4 & 8 & 16\\
1 & -2 & 4 & -8 & 16\\
\end{array}\right).$$
The polynomial $p$ in this case is given by $p(x) = (x - 2)(x + 2) = x^{2} - 4$ which produces the following vectors
\vskip-0.2cm
$$\hskip-6cm\bar{k}_{1} = \left(\begin{array}{c}- 4\\ 0\\ 1\\ 0\\0\end{array}\right),
\bar{k}_{2} = \left(\begin{array}{c}0 \\ - 4\\ 0\\ 1\\ 0\end{array}\right),
\bar{k}_{3} = \left(\begin{array}{c}0 \\ 0 \\ - 4\\ 0\\ 1\end{array}\right).$$
Hence, $E(\pi\Lambda')$ will be a set of orthogonal exponentials for the orthogonal projection of $MC_{5}$ on $\Bbb R^{3}$ where
$$\hskip-2.25cm\Lambda' = \{\alpha_{1}(4, 3, 3) + \alpha_{2}(0, 3, - 3) + \alpha_{3}(0, 3, 3):\alpha_{1},\alpha_{2},\alpha_{3}\in\Bbb Z\}.$$
Now, we are left with computing the orthogonal projection of $MC_{5}$ on $\Bbb R^{3}$. By a direct compuation we have that
\vskip-0.2cm
$$\hskip-5.5cm M = \left(\begin{array}{ccccc}
1 & 0 & -5/4 & 0 & 1 / 4\\
0 & 2/3 & 2/3 & -1/6 & - 1 / 6\\
0 & -2/3 & 2/3 & 1/6 & -1 / 6\\
0 & -1/12 & -1/24 & 1/12 & 1 / 24\\
0 & 1/12 & -1/24 & -1/12 & 1 / 24
\end{array}\right).$$

%\begin{comment}

\begin{figure}[t]

    \caption{}

    \centering

    \begin{tikzpicture}

        \begin{axis}[xtick={0}, xlabel = {$X$}, ytick={0}, ylabel = {$Y$}, ztick={0}, view = {10}{60}, scale = 1.5]

            \addplot3 [samples = 5,domain = 0:1]
            (
                {x * (-2.5) + (1 - x) * (-0.5)}, {x * (1.664) + (1 - x) * (1.664)}, {x * (0) + (1 - x) * (0)}
            );
            \addplot3 [samples = 5,domain = 0:1]
            (
                {x * (-2.5) + (1 - x) * (-0.5)}, {x * (0) + (1 - x) * (0)}, {x * (1.664) + (1 - x) * (1.664)}
            );
            \addplot3 [samples = 5,domain = 0:1]
            (
                {x * (2.5) + (1 - x) * (0.5)}, {x * (-1.664) + (1 - x) * (-1.664)}, {x * (0) + (1 - x) * (0)}
            );
            \addplot3 [samples = 5,domain = 0:1]
            (
                {x * (2.5) + (1 - x) * (0.5)}, {x * (0) + (1 - x) * (0)}, {x * (-1.664) + (1 - x) * (-1.664)}
            );
            \addplot3 [samples = 5,domain = 0:1]
            (
                {x * (-2.5) + (1 - x) * (-2.5)}, {x * (0) + (1 - x) * (1.664)}, {x * (1.664) + (1 - x) * (0)}
            );
            \addplot3 [samples = 5,domain = 0:1]
            (
                {x * (-0.5) + (1 - x) * (-0.5)}, {x * (1.664) + (1 - x) * (0)}, {x * (0) + (1 - x) * (1.664)}
            );
            \addplot3 [samples = 5,domain = 0:1]
            (
                {x * (2.5) + (1 - x) * (2.5)}, {x * (0) + (1 - x) * (-1.664)}, {x * (-1.664) + (1 - x) * (0)}
            );
            \addplot3 [samples = 5,domain = 0:1]
            (
                {x * (0.5) + (1 - x) * (0.5)}, {x * (-1.664) + (1 - x) * (0)}, {x * (0) + (1 - x) * (-1.664)}
            );
            \addplot3 [samples = 5,domain = 0:1]
            (
                {x * (-2.5) + (1 - x) * (-2)}, {x * (1.664) + (1 - x) * (1.332)}, {x * (0) + (1 - x) * (-0.332)}
            );
             \addplot3 [samples = 5,domain = 0:1]
            (
                {x * (-2.5) + (1 - x) * (-2)}, {x * (0) + (1 - x) * (-0.332)}, {x * (1.664) + (1 - x) * (1.332)}
            );
            \addplot3 [samples = 5,domain = 0:1]
            (
                {x * (2.5) + (1 - x) * (2)}, {x * (-1.664) + (1 - x) * (-1.332)}, {x * (0) + (1 - x) * (0.332)}
            );
            \addplot3 [samples = 5,domain = 0:1]
            (
                {x * (2.5) + (1 - x) * (2)}, {x * (0) + (1 - x) * (0.332)}, {x * (-1.664) + (1 - x) * (-1.332)}
            );
            \addplot3 [samples = 5,domain = 0:1]
            (
                {x * (-2) + (1 - x) * (-2)}, {x * (-0.332) + (1 - x) * (1.332)}, {x * (1.332) + (1 - x) * (-0.332)}
            );
            \addplot3 [samples = 5,domain = 0:1]
            (
                {x * (2) + (1 - x) * (2)}, {x * (0.332) + (1 - x) * (-1.332)}, {x * (-1.332) + (1 - x) * (0.332)}
            );
             \addplot3 [samples = 5,domain = 0:1]
            (
                {x * (0.5) + (1 - x) * (-2)}, {x * (-1.664) + (1 - x) * (-0.332)}, {x * (0) + (1 - x) * (1.332)}
            );
            \addplot3 [samples = 5,domain = 0:1]
            (
                {x * (-0.5) + (1 - x) * (2)}, {x * (1.664) + (1 - x) * (0.332)}, {x * (0) + (1 - x) * (-1.332)}
            );
            \addplot3 [samples = 5,domain = 0:1]
            (
                {x * (0.5) + (1 - x) * (-2)}, {x * (0) + (1 - x) * (1.332)}, {x * (-1.664) + (1 - x) * (-0.332)}
            );
            \addplot3 [samples = 5,domain = 0:1]
            (
                {x * (-0.5) + (1 - x) * (2)}, {x * (0) + (1 - x) * (-1.332)}, {x * (1.664) + (1 - x) * (0.332)}
            );
        \end{axis}

   \end{tikzpicture}

\end{figure}

%\end{comment}

Hence, using the factorization $C_{5} = [-e_{1}, e_{1}] + ... + [-e_{5}, e_{5}]$ of the unit cube we have that
\vskip-0.2cm
$$\hskip-3.25cm P = \mathbf{Proj}_{\Bbb R^{3}}(MC_{5}) = \mathbf{Proj}_{\Bbb R^{3}}(M([-e_{1}, e_{1}] + ... + [-e_{5}, e_{5}]))$$
$$\hskip-5.8cm = \mathbf{Proj}_{\Bbb R^{3}}([-Me_{1}, Me_{1}] + ... + [-Me_{5}, Me_{5}])$$
$$\hskip-4.2cm = \mathbf{Proj}_{\Bbb R^{3}}([-Me_{1}, Me_{1}]) + ... + \mathbf{Proj}_{\Bbb R^{3}}([-Me_{5}, Me_{5}])$$
$$\hskip-0.95cm = [-\mathbf{Proj}_{\Bbb R^{3}}(Me_{1}), \mathbf{Proj}_{\Bbb R^{3}}(Me_{1})] + ... + [-\mathbf{Proj}_{\Bbb R^{3}}(Me_{5}), \mathbf{Proj}_{\Bbb R^{3}}(Me_{5})]$$
$$\hskip-3.75cm = \left[-(1, 0, 0), (1, 0, 0)\right] + \left[-\left(0, 2/3, -2/3\right), \left(0, 2/3, -2/3\right)\right]$$
$$\hskip-0.5cm + \left[-\left(-5/4, 2/3, 2/3\right), \left(-5/4, 2/3, 2/3\right)\right] + \left[- \left(0, -1/6, 1/6\right), \left(0, -1/6, 1/6\right)\right]$$
$$\hskip-6.25cm + \left[-\left(1/4, -1/6, -1/6\right), \left(1/4, -1/6, -1/6\right)\right]$$
$$\hskip-3.45cm = \mathrm{Conv}\left\{
\pm\left(\frac{1}{2}, -\frac{5}{3}, 0\right),
\pm\left(-\frac{5}{2}, 0, \frac{5}{3}\right),
\pm\left(-2, -\frac{1}{3}, \frac{4}{3}\right),\right.$$
$$\hskip-1.6cm\left.\pm\left(\frac{1}{2}, 0, -\frac{5}{3}\right),
\pm\left(-\frac{5}{2}, \frac{5}{3}, 0\right),
\pm\left(-2, \frac{4}{3}, -\frac{1}{3}\right)\right\}$$
(see Figure 2 for the polytope $P$).

\end{example}

\section{Appendix}

\begin{prop}

Let $P$ be a simple, convex and rational polytope. Let $E(\Lambda)$ be any infinite set of orthogonal exponentials produced by the construction used in the proof of Theorem 2.1. Then, if the density of $\Lambda$ exists then it must be zero.

\end{prop}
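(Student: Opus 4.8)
The plan is to isolate the one structural feature of the construction in the proof of Theorem 2.1 that survives every choice made in the greedy procedure, and to show that it alone forces zero density. That feature is: the set $\Lambda$ lies in the full rank lattice $L = 2\pi n\cdot\Bbb Z^{d}$, and by (\ref{eq:102}) its difference set satisfies $\Lambda-\Lambda\subseteq[L\setminus(H_{1}\cup\cdots\cup H_{N})]\cup\{\bar{0}\}$, i.e. the differences avoid each $H_{i}$ away from the origin. Each $H_{i}$ passes through the origin and has the form $H_{i}=\{\omega:\langle n_{i},\omega\rangle=0\}$, where $n_{i}$ is an edge vector $v_{\ell}-v$ of $P$; since $P$ is rational, $n_{i}$ is a \emph{nonzero rational} vector. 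I would fix one such hyperplane, say $H_{1}$, set $\ell_{1}(\omega)=\langle n_{1},\omega\rangle$, and observe that the difference condition forces $\ell_{1}(\lambda)\neq\ell_{1}(\lambda')$ for all distinct $\lambda,\lambda'\in\Lambda$; that is, $\ell_{1}$ is injective on $\Lambda$.

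Next I would make this injectivity quantitative using rationality. Because $n_{1}$ is rational and $L=2\pi n\cdot\Bbb Z^{d}$, the image $\ell_{1}(L)=2\pi n\{\langle n_{1},k\rangle:k\in\Bbb Z^{d}\}$ is a discrete subgroup of $\Bbb R$, say $\ell_{1}(L)=a\Bbb Z$ with $a>0$ (positivity holds since $n_{1}\neq\bar{0}$). Hence the values $\{\ell_{1}(\lambda):\lambda\in\Lambda\}$ are distinct elements of $a\Bbb Z$ and so are pairwise at distance at least $a$. On the other hand, for any box $x+S_{\rho}$ the image $\ell_{1}(x+S_{\rho})$ is an interval of length $\rho\,\|n_{1}\|_{1}$. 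Since $\ell_{1}$ is injective on $\Lambda$ we have $|\Lambda\cap(x+S_{\rho})|=|\ell_{1}(\Lambda\cap(x+S_{\rho}))|$, and the latter counts distinct points of $a\Bbb Z$ lying in an interval of length $\rho\,\|n_{1}\|_{1}$, giving the uniform bound
$$|\Lambda\cap(x+S_{\rho})|\leq \frac{\rho\,\|n_{1}\|_{1}}{a}+1,\qquad\forall x\in\Bbb R^{d}.$$

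Finally I would feed this into the definition (\ref{eq:2}). Dividing by $\rho^{d}$ and taking the supremum over $x$,
$$\sup_{x\in\Bbb R^{d}}\frac{|\Lambda\cap(x+S_{\rho})|}{\rho^{d}}\leq\frac{\rho\,\|n_{1}\|_{1}/a+1}{\rho^{d}},$$
and the right-hand side tends to $0$ as $\rho\to\infty$ because $d\geq 2$. Thus $D^{+}(\Lambda)=0$, and since $0\leq D^{-}(\Lambda)\leq D^{+}(\Lambda)$, whenever the density exists it must equal $D^{+}(\Lambda)=0$. (The hypothesis $d\geq2$ is genuinely needed: for $d=1$ the locus $H_{1}=\{\bar{0}\}$ is not a hyperplane, the construction yields the full rank lattice, and the density is positive.) The only substantive point, which I would take care to justify cleanly, is the passage from ``differences avoid the rational hyperplane $H_{1}$'' to the $O(\rho)$ point count; this rests precisely on the discreteness of $\ell_{1}(L)$ and hence on the rationality of $P$. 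Everything else is bookkeeping with the density definition, and notably the argument uses only a single hyperplane $H_{1}$ rather than the whole family.
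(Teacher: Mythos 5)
Your proof is correct, and it takes a genuinely different --- and considerably more economical --- route than the paper's. The paper argues by contradiction: assuming $D(\Lambda)=C>0$, it extracts a point $p_{n,m}$ of $\Lambda$ from every translated box $(2\pi nN,2\pi mN)+S_{2\pi N}$, tracks the set $\Omega_{m}$ of lattice points that the greedy procedure must discard along the lines $p_{n,m}+(2\pi u,2\pi v)t$, verifies (via a case analysis excluding repeated points) that the $\Omega_{m}$ are pairwise disjoint and each of the same positive density $1/(4\pi^{2}vN)$, and concludes that $2\pi\Bbb Z^{2}$ would have infinite density. You instead isolate the single structural consequence that $\Lambda\subseteq L$ and that, by (\ref{eq:102}), $\Lambda-\Lambda$ avoids one fixed hyperplane $H_{1}$ with nonzero rational normal $n_{1}$ away from the origin; hence $\ell_{1}=\langle n_{1},\cdot\rangle$ is injective on $\Lambda$ and maps $L$ into a discrete subgroup $a\Bbb Z$, which yields the uniform count $|\Lambda\cap(x+S_{\rho})|\leq\rho\|n_{1}\|_{1}/a+1$ and therefore $D^{+}(\Lambda)=0$ once $d\geq2$. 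This buys several things the paper's argument does not: it is direct rather than by contradiction, it is written uniformly in all dimensions (the paper treats $d=2$ and asserts the general case is similar), it applies verbatim to \emph{any} $\Lambda\subseteq L$ satisfying (\ref{eq:102}) rather than only to outputs of the specific greedy procedure, and it proves the stronger statement that the upper density vanishes, which subsumes the proposition. The paper's argument does exhibit finer combinatorial information about the discarded sets, but none of it is needed for the stated conclusion. Your parenthetical on $d=1$ correctly identifies the implicit hypothesis $d\geq2$ shared by both proofs: for $d=1$ the construction returns all of $2\pi n\Bbb Z$, whose density is positive, so the proposition as stated must be read with $d\geq2$.
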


\begin{proof}

In order to make the proof more transparent we will assume for simplicity that $d = 2$. The proof in higher dimensions is similar. Also, observe that from the construction of the set $\Lambda$, given in the proof of Theorem 2.1, it all ways follows that $\Lambda\subseteq 2\pi\Bbb Z^{2}$. The reader should take note that we will implicitly be using this fact during the proof of Proposition 4.1.

Observe that we can assume that all the vertices of $P$ belong to $\Bbb Z^{2}$. Indeed, if $N$ is a common denominator for all the rational components in all the vertices of $P$ then the affine transformation $x\mapsto Nx$ will map $P$ to a polytope $P'$ whose vertices are all in $\Bbb Z^{2}$. Hence, if $\Lambda$ is the set constructed for the polytope $P$, by using the method in the proof of Theorem 2.1, and $\Lambda'$ is the corresponding set constructed for $P'$, using the exact same construction, then $D(\Lambda') = N^{2}D(\Lambda)$. Hence, the density of $\Lambda$ is zero if and only if the same is true for $\Lambda'$.

Thus, we can assume that the lattice $L$, given as in the proof of Theorem 2.1, is equal to $2\pi\Bbb Z^{2}$.

Now, assume by contradiction that $\Lambda$ has a positive density. Then, by the definition of the density $D(\Lambda)$ we have in particular that
\vskip-0.2cm
$$\hskip-5.75cm D(\Lambda) = \lim_{\rho\rightarrow\infty}\inf_{x\in\Bbb R^{2}}\frac{|\Lambda\cap(x + S_{\rho})|}{\rho^{2}} = C > 0$$
where $S_{\rho} = [0, \rho)\times[0, \rho)$. Hence, there exists a positive real number $\rho_{0}$ such that for every $\rho\geq\rho_{0}$ and $x\in\Bbb R^{2}$ we have $|\Lambda\cap(x + S_{\rho})| > 0$. Let $N$ be an integer which is greater or equal to $\rho_{0}$. Then, it follows that for every $x$ in $\Bbb R^{2}$ the set $\Lambda\cap(x + S_{2\pi N})$ is nonempty. In particular, for every $(n, m)\in\Bbb Z^{2}$ the set $B_{n,m} = \Lambda\cap((2\pi nN, 2\pi mN) + S_{2\pi N})$ is nonempty. Hence, there exists a point $p_{n,m}\in B_{n,m}$ and from the definition of $B_{n,m}$ the point $p_{n,m}$ has the form $p_{n,m} = (2\pi nN, 2\pi mN) + v_{n, m}$ where $v_{n, m}\in S_{2\pi N}\cap 2\pi\Bbb Z^{2}$. Since $B_{n,m}\subset \Lambda$ it follows that $p_{n,m}\in \Lambda$ for every $(n,m)\in\Bbb Z^{2}$.

Now, let us recall that in the construction of $\Lambda$, which was used in the proof of Theorem 2.1, for each point $p$ we chose in $2\pi\Bbb Z^{2}$ we had to omit (except for $p$) all the points in $2\pi\Bbb Z^{2}$ which are on the lines which are orthogonal to any of the edges of the polytope $P$ and which pass through $p$. Observe that since all of the vertices of $P$ have integer components then it follows that each line $l$, which is orthogonal to one of the edges of $P$ and which passes through $p$, has a parametrization of the form
\vskip-0.2cm
$$\hskip-8.7cm l = p + (2\pi u, 2\pi v)t, t\in\Bbb R$$
where $u$ and $v$ are integers. Let us take some integers $u$ and $v$ such that the vector $(u, v)$ is orthogonal to one of the edges of $P$ and observe that since $(u, v)\neq (0, 0)$ we can assume, without loss of generality, that $v\neq0$.

Hence, it follows that for each point $p_{n,m}$, as was constructed above, we have to omit the line $l_{n,m}:= p_{n,m} + (2\pi u, 2\pi v)t, t\in\Bbb R$ from $2\pi\Bbb Z^{2}$ (except for the point $p_{n,m}$). Our aim now is to use this fact in order to obtain a contradiction, i.e, we will show that it cannot be that all the points $p_{n,m}, (n, m)\in\Bbb Z^{2}$ are in $\Lambda$ if for each point $p_{n,m}$ we omit the line $l_{n,m}$ (except for the point $p_{n,m}$) from $2\pi\Bbb Z^{2}$.

For this, let us fix an integer $m$ and let us consider the set $\Omega_{m}\subseteq 2\pi\Bbb Z^{2}$ which is omitted after choosing the points $p_{n, m}, n\in\Bbb Z$ and then omitting the corresponding lines $l_{n,m}, n\in\Bbb Z$ from $2\pi\Bbb Z^{2}$ (except for the points $p_{n,m}$). We claim that $\Omega_{m}$ has a positive density. Indeed, by the construction of the points $p_{n,m}$ it follows that $\Omega_{m}$ is given by
\vskip-0.2cm
$$\hskip-5.75cm\Omega_{m} = \{p_{n,m} + k(2\pi u, 2\pi v):n\in\Bbb Z, k\in\Bbb Z\setminus\{0\}\}$$
\begin{equation}\label{eq:200}\hskip-1.85cm = \{(2\pi nN, 2\pi mN) + k(2\pi u, 2\pi v) + v_{n, m}:n\in\Bbb Z, k\in\Bbb Z\setminus\{0\}\}.\end{equation}
First, observe that in the set which defines $\Omega_{m}$, in the right hand side of equation (\ref{eq:200}), there are no repeated points. That is, if $(n, k), (n', k')\in\Bbb Z\times(\Bbb Z\setminus\{0\})$ and $(n, k)\neq (n', k')$ then
\begin{equation}\label{eq:201}(2\pi nN, 2\pi mN) + k(2\pi u, 2\pi v) + v_{n, m}\neq (2\pi n'N, 2\pi mN) + k'(2\pi u, 2\pi v) + v_{n', m}.\end{equation}
Indeed, first observe that for every two different points $(a, b), (a', b')\in\Bbb Z^{2}$, $(a, b)\neq(a', b')$ we have that $p_{a, b}\neq p_{a', b'}$ since otherwise we will have
$$\hskip-5cm (2\pi aN, 2\pi bN) +  v_{a, b} = (2\pi a'N, 2\pi b'N) +  v_{a', b'}.$$
But, since $v_{a, b}, v_{a', b'}\in 2\pi\Bbb Z^{2}\cap S_{2\pi N}$, we will have that
$$(2\pi (a - a')N, 2\pi (b - b')N) =  v_{a', b'} - v_{a, b}\in S_{2\pi N} - S_{2\pi N}\subseteq (- 2\pi N, 2\pi N)\times(- 2\pi N, 2\pi N)$$
which will imply that $a = a', b = b'$ in contrast to our assumption. Hence, if there is equality in equation (\ref{eq:201}) then if $k = k'$ then we will have that $p_{n, m} = p_{n', m}$ which, as was explained above, will imply that $n = n'$ and thus $(n, k) = (n', k')$ in contrast to our assumption. If $k\neq k'$ then we will have the equality $p_{n', m} = p_{n, m} + (k - k')(2\pi u, 2\pi v)$ which will imply that $p_{n', m}$ is a point that must be omitted after choosing the point $p_{n, m}$ which is a contradiction to the assumption that $p_{n', m}\in \Lambda$.

Thus, since there are no repeated points in the set in the right hand side of equation (\ref{eq:200}) which defines $\Omega_{m}$ and since the points $v_{n,m}$ are bounded, i.e., belong to $S_{2\pi N}\cap 2\pi\Bbb Z^{2}$, it easily follows that $\Omega_{m}$ has the same density as the set
$$\hskip-4.3cm\Omega' = \{(2\pi nN, 2\pi mN) + k(2\pi u, 2\pi v):n\in\Bbb Z, k\in\Bbb Z\}$$
$$\hskip-1.85cm = \{(0, 2\pi mN)\} + \underset{\Omega''}{\underbrace{\{(2\pi N, 0)n + (2\pi u, 2\pi v)k:n\in\Bbb Z, k\in\Bbb Z\}}}.$$
Since $v\neq0$ then $\Omega''$ is a full rank lattice whose density is positive and is given explicitly by $1 / (4\pi^{2}vN)$ and obviously $\Omega'$ and $\Omega''$ have the same density. Hence, $\Omega_{m}$ has a positive density which is equal to $1 / (4\pi^{2}vN)$  and does not depend on $m$.

Next, observe that all the sets $\Omega_{m}, m\in\Bbb Z$ are disjoint. Indeed, otherwise there exist two different integers $m$ and $m'$ such that
$$\hskip-6cm p_{n,m} + k(2\pi u, 2\pi v) = p_{n' ,m'} + k'(2\pi u, 2\pi v)$$
for some $n, n'\in\Bbb Z, k, k'\in\Bbb Z\setminus\{0\}$. If $k = k'$ then $p_{n, m} = p_{n', m'}$ which will imply that $m = m'$ in contrast to our assumption. If $k\neq k'$ then $p_{n', m'} = p_{n, m} + (k - k')(2\pi u, 2\pi v)$ which will imply that $p_{n',m'}$ is a point that must be omitted after choosing the point $p_{n, m}$ which is a contradiction to the assumption that it belongs to $\Lambda$.

Now, we easily obtain a contradiction since all the sets $\Omega_{m}\subseteq 2\pi\Bbb Z^{2}, m\in\Bbb Z$ are disjoint and have the same positive density but this will imply that $D(2\pi\Bbb Z^{2}) = \infty$ which is not true.

\end{proof}

Before proving Lemma 4.2 we make the following notation. If $M$ is an $n\times m$ matrix whose rows are given by the vectors $\bar{v}_{1},..,\bar{v}_{n}$ then we denote by $\bar{v}^{(1)},...,\bar{v}^{(m)}$ the column vectors of $M$, i.e., $\bar{v}^{(k)} = Me_{k}, 1\leq k\leq m$.

\begin{lem}

Let $\bar{v}_{i}, i = 1,...,d'$, $1\leq d'\leq d - 1,$ be linearly independent vectors in $\Bbb R^{d}$. Then, the following two conditions are equivalent:\\

$\bullet$ There exist $d - d'$ linearly independent (with respect to $\Bbb R$) integer vectors $\bar{k}_{i}, i = 1,...,d - d',$ which are orthogonal to each one of the vectors $\bar{v}_{1},...,\bar{v}_{d'}$.\\

$\bullet$ There exists a subset $S = \{i_{1},...,i_{d - d'}\}\subset\{1,...,d\}$, with the complement $\{1,...,d\}\setminus S = \{j_{1},...,j_{d'}\}$, such that the column vectors $\bar{v}^{(j_{1})},...,\bar{v}^{(j_{d'})}$ are linearly independent and such that the $d'\times(d - d')$ matrix $(\bar{v}^{(j_{1})},...,\bar{v}^{(j_{d'})})^{-1}(\bar{v}^{(i_{1})},...,\bar{v}^{(i_{d - d'})})$ is rational.

\end{lem}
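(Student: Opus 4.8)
The plan is to reformulate everything in terms of the $d'\times d$ matrix $V$ whose rows are $\bar v_{1},\ldots,\bar v_{d'}$, so that a vector $\bar k\in\Bbb R^{d}$ is orthogonal to all of $\bar v_{1},\ldots,\bar v_{d'}$ precisely when $V\bar k=0$, i.e. $\bar k\in\ker V$. Since the $\bar v_{i}$ are linearly independent, $V$ has full row rank $d'$ and $\dim\ker V=d-d'$; hence the first condition is equivalent to saying that $\ker V$ possesses a basis of integer vectors. The column vectors of the lemma are exactly the columns $\bar v^{(k)}=Ve_{k}$ of $V$, and for an index set $J=\{j_{1},\ldots,j_{d'}\}$ with complement $S=\{i_{1},\ldots,i_{d-d'}\}$ I write $V_{J}=(\bar v^{(j_{1})},\ldots,\bar v^{(j_{d'})})$ and $V_{S}=(\bar v^{(i_{1})},\ldots,\bar v^{(i_{d-d'})})$; thus the second condition asserts the existence of $J$ for which $V_{J}$ is invertible and $V_{J}^{-1}V_{S}$ is rational. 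After reindexing the coordinates so that $J$ precedes $S$ (which affects none of the determinant, invertibility, or rationality assertions) I may treat $V$ as the block row $(V_{J}\ V_{S})$ and any $d\times(d-d')$ matrix $K$ supported on $\ker V$ as the block column with rows $K_{J}$ (in $J$) and $K_{S}$ (in $S$), so that $VK=V_{J}K_{J}+V_{S}K_{S}$.

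For the implication second $\Rightarrow$ first I would argue directly: given such a $J$, set $\tilde K_{S}=I_{d-d'}$ and $\tilde K_{J}=-V_{J}^{-1}V_{S}$, and let $\tilde K$ be the corresponding $d\times(d-d')$ matrix. Then $V\tilde K=V_{J}(-V_{J}^{-1}V_{S})+V_{S}=0$, so its columns lie in $\ker V$; they are linearly independent because the $S$-block of $\tilde K$ is the identity; and $\tilde K$ is rational by hypothesis. Multiplying $\tilde K$ by a common denominator of its entries turns these columns into $d-d'$ linearly independent integer vectors orthogonal to every $\bar v_{i}$, which is the first condition.

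For the reverse implication first $\Rightarrow$ second I would start from an integer matrix $K$ whose $d-d'$ columns are the given linearly independent integer vectors; they span $\ker V$ because $\dim\ker V=d-d'$. Since $\mathrm{rank}\,K=d-d'$, the matrix $K$ has $d-d'$ linearly independent rows; let $S$ index such rows, so that the $(d-d')\times(d-d')$ submatrix $K_{S}$ is invertible, and let $J$ be its complement. The main step, and the point I expect to be the real obstacle, is the duality fact that invertibility of $K_{S}$ forces invertibility of the complementary block $V_{J}$. This I would prove by contradiction: if $w\neq 0$ satisfies $w^{T}V_{J}=0$, then from $VK=0$ one gets $0=w^{T}VK=(w^{T}V_{S})K_{S}$, whence $w^{T}V_{S}=0$ because $K_{S}$ is invertible; but then $w^{T}V=0$, contradicting the full row rank of $V$.

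Once $V_{J}$ is known to be invertible, the relation $V_{J}K_{J}+V_{S}K_{S}=0$ gives $V_{S}=-V_{J}K_{J}K_{S}^{-1}$ and hence $V_{J}^{-1}V_{S}=-K_{J}K_{S}^{-1}$. Because $K_{J}$ and $K_{S}$ are integer matrices and $K_{S}$ is invertible, $K_{S}^{-1}$ is rational, so $V_{J}^{-1}V_{S}$ is rational; together with the invertibility of $V_{J}$ this is exactly the second condition. The only genuinely nontrivial ingredient is the complementary-minor duality of the previous paragraph; everything else is bookkeeping with block matrices and clearing of denominators.
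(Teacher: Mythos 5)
Your proposal is correct and follows essentially the same route as the paper: both directions hinge on the block relation $V_{J}K_{J}+V_{S}K_{S}=0$, with $S$ chosen so that the corresponding $(d-d')\times(d-d')$ block of the integer matrix is invertible, yielding $V_{J}^{-1}V_{S}=-K_{J}K_{S}^{-1}\in\Bbb Q^{d'\times(d-d')}$, and the converse obtained by taking the identity block and clearing denominators exactly as the paper does with its choice $K''=I$. Your left-null-space argument for the invertibility of $V_{J}$ is just a slightly cleaner phrasing of the paper's observation that the columns indexed by $S$ are expressible through those indexed by $J$.
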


\begin{proof}

Let us prove the first direction. That is, we assume that there exist $d - d'$ linearly independent (with respect to $\Bbb R$) integer vectors $\bar{k}_{i}, i = 1,...,d - d',$ which are orthogonal to each one of the vectors $\bar{v}_{1},...,\bar{v}_{d'}$ and our aim is to show the existence of the set $S$ as described in the formulation of Lemma 4.2.

Observe that since $\mathrm{rank}\{\bar{k}_{1},...,\bar{k}_{d - d'}\} = d - d'$ then there must be $d - d'$ column vectors $\bar{k}^{(i_{m})}, m = 1,...,d - d'$ which are linearly independent. Then, we choose $S = \{i_{1},...,i_{d - d'}\}$. Now, we claim that the column vectors $\bar{v}^{(j_{m})}, m = 1,...,d'$ are linearly independent. Indeed, the condition that $\bar{k}_{1},...,\bar{k}_{d - d'}$ are orthogonal to $\bar{v}_{1},...,\bar{v}_{d'}$ can explicitly be written by
$$\hskip-3.2cm k_{1, i_{1}}\bar{v}^{(i_{1})} + ... + k_{1,i_{d - d'}}\bar{v}^{i_{(d - d')}} = - k_{1, j_{1}}\bar{v}^{(j_{1})} - ... - k_{1, j_{d'}}\bar{v}^{(j_{d'})},$$
\begin{equation}\label{eq:210}\hskip-9cm.......................\end{equation}
$$\hskip-1.35cm k_{d - d', i_{1}}\bar{v}^{(i_{1})} + ... + k_{d - d',i_{d - d'}}\bar{v}^{i_{(d - d')}} = - k_{d - d', j_{1}}\bar{v}^{(j_{1})} - ... - k_{d - d', j_{d'}}\bar{v}^{(j_{d'})}.$$
Since $\bar{k}^{(i_{1})},...,\bar{k}^{(i_{d - d'})}$ are linearly independent it follows that the coefficients $(d - d')\times(d - d')$ matrix $\{k_{i,j}\}_{i = 1,...,d - d', j = i_{1},...,i_{d - d'}}$, in the left hand side of the last system of equations, is nondegenerate. Hence, $\bar{v}^{(i_{1})},...,\bar{v}^{(i_{d - d'})}$ can be expressed in terms of $\bar{v}^{(j_{1})},...,\bar{v}^{(j_{d'})}$ and thus these vectors must be linearly independent (since $\mathrm{rank}\{\bar{v}^{(1)},...,\bar{v}^{(d)}\} = d'$). The system of equations (\ref{eq:210}) has the following matrix form
$$\hskip-4.5cm\underset{A}{\underbrace{\left(\begin{array}{ccc}
v_{1, j_{1}} & ... & v_{1, j_{d'}}\\
& ........ \\
v_{d', j_{1}} & ... & v_{d', j_{d'}}
\end{array}\right)}}
\left(\begin{array}{ccc}
k_{1, j_{1}} & ... & k_{d - d', j_{1}}\\
& ........ \\
k_{1, j_{d'}} & ... & k_{d - d', j_{d'}}
\end{array}\right)$$
$$\hskip-2cm = - \left(\begin{array}{ccc}
v_{1, i_{1}} & ... & v_{1, i_{d - d'}}\\
& ........ \\
v_{d', i_{1}} & ... & v_{d', i_{d - d'}}
\end{array}\right)
\underset{B}{\underbrace{\left(\begin{array}{ccc}
k_{1, i_{1}} & ... & k_{d - d', i_{1}}\\
& ........ \\
k_{1, i_{d - d'}} & ... & k_{d - d', i_{d - d'}}
\end{array}\right)}},$$
where the matrices $A$ and $B$ are invertible. Hence, we obtained that
$$\hskip-4cm\left(\begin{array}{ccc}
v_{1, j_{1}} & ... & v_{1, j_{d'}}\\
& ........ \\
v_{d', j_{1}} & ... & v_{d', j_{d'}}
\end{array}\right)^{-1}\left(\begin{array}{ccc}
v_{1, i_{1}} & ... & v_{1, i_{d - d'}}\\
& ........ \\
v_{d', i_{1}} & ... & v_{d', i_{d - d'}}
\end{array}\right)$$
$$ = \left(\begin{array}{ccc}
k_{1, j_{1}} & ... & k_{d - d', j_{1}}\\
& ........ \\
k_{1, j_{d'}} & ... & k_{d - d', j_{d'}}
\end{array}\right)\left(\begin{array}{ccc}
k_{1, i_{1}} & ... & k_{d - d', i_{1}}\\
& ........ \\
k_{1, i_{d - d'}} & ... & k_{d - d', i_{d - d'}}
\end{array}\right)^{-1}\in\Bbb Q^{d'\times(d - d')}.$$
On the other hand, suppose that there exists a subset $S = \{i_{1},...,i_{d - d'}\}\subset\{1,...,d\}$ such that $\{1,...,d\}\setminus S = \{j_{1},...,j_{d'}\}$ and such that the column vectors $\bar{v}^{(j_{1})},...,\bar{v}^{(j_{d'})}$ are linearly independent and suppose that there exists a $d'\times(d - d')$ rational matrix $K = \{k_{i,j}\}_{i = 1,...,d', j = 1,...,d - d'}$ such that
$$\left(\begin{array}{ccc}
v_{1, j_{1}} & ... & v_{1, j_{d'}}\\
& ........ \\
v_{d', j_{1}} & ... & v_{d', j_{d'}}
\end{array}\right)^{-1}\left(\begin{array}{ccc}
v_{1, i_{1}} & ... & v_{1, i_{d - d'}}\\
& ........ \\
v_{d', i_{1}} & ... & v_{d', i_{d - d'}}
\end{array}\right) = \left(\begin{array}{ccc}
k_{1, 1} & ... & k_{1, d - d'}\\
 & .......\\
k_{d', 1} & ... & k_{d', d - d'}\\
\end{array}\right).$$
Our aim is to show that there exist linearly independent integer vectors $\bar{k}_{1},...,\bar{k}_{d - d'}$ which are orthogonal to $\bar{v}_{1},...,\bar{v}_{d'}$. For this, observe that if we can find rational matrices $K' = \{k_{i,j}'\}_{i = 1,...,d - d', j = 1,..,d'}$ and $K'' = \{k_{i,j}''\}_{i = 1,...,d - d', j = 1,...,d - d'}$ such that $K''$ is invertible and such that
$$\left(\begin{array}{ccc}
k_{1, 1}' & ... & k_{d - d', 1}'\\
& ........ \\
k_{1, d'}' & ... & k_{d - d', d'}'
\end{array}\right)\left(\begin{array}{ccc}
k_{1, 1}'' & ... & k_{d - d', 1}''\\
& ........ \\
k_{1, d - d'}'' & ... & k_{d - d', d - d'}''
\end{array}\right)^{-1} = \left(\begin{array}{ccc}
k_{1, 1} & ... & k_{1, d - d'}\\
 & .......\\
k_{d', 1} & ... & k_{d', d - d'}\\
\end{array}\right),$$
then we can just reverse our argument in the proof of the first direction in order to show the existence of the integer vectors $\bar{k}_{1},...,\bar{k}_{d - d'}$ as described above. Indeed, by reversing our argument we will obtain rational vectors $\bar{k}_{1},...,\bar{k}_{d - d'}$ such that the system (\ref{eq:210}) is satisfied which is equivalent to the fact that these vectors are orthogonal to $\bar{v}_{1},...,\bar{v}_{d'}$. In order to obtain integer vectors we can just multiply each such rational vector $\bar{k}_{i}, 1\leq i\leq d - d'$, by the common denominator of all the rational components of these vectors. The fact that these integer vectors are linearly independent follows easily from the fact that $K''$ in invertible.

As for the matrices $K'$ and $K''$ we can choose $K''$ to be the identity matrix and choose $k_{i,j}' := k_{j,i}$ for $i = 1,...,d - d', j = 1,...,d'$.

\end{proof}

\end{document}